\crefname{section}{Section}{Sections}
\crefname{subsection}{Subsection}{Subsections}
\crefname{appendix}{Appendix}{Appendix}
\crefname{figure}{Figure}{Figures}
\crefname{table}{Table}{Tables}
\crefname{property}{Property}{Properties}
\crefname{theorem}{Theorem}{Theorem}
\newtheorem{theorem}{Theorem}[section]
\newtheorem{definition}{Definition}[section]
\newtheorem{remark}{Remark}[section]
\newtheorem{assumption}{Assumption}
\numberwithin{equation}{section}
\definecolor{electricpurple}{rgb}{0.75,0.0,1.0}
\definecolor{darkred}{rgb}{0.65,0,0}
\definecolor{green}{rgb}{0.0, 0.5, 0.0}
\newcommand\deletei{\bgroup\markoverwith{\textcolor{darkred}{\rule[0.5ex]{1pt}{1pt}}}\ULon}
\newcommand\deleteii{\bgroup\markoverwith{\textcolor{blue}{\rule[0.5ex]{2pt}{2pt}}}\ULon}
\title {Weak formulation and spectral approximation of a Fokker-Planck equation for neural ensembles} 
\author{Ling Yan\thanks{Beijing Computational Science Research Center, Beijing, China, 100193, email: {\tt yanling@csrc.ac.cn}}, 
~~Pei Zhang\thanks{Beijing Computational Science Research Center, Beijing, China, 100193, email: {\tt zhangpei@csrc.ac.cn}}, 
~~Yanli Wang\thanks{Beijing Computational Science Research Center, Beijing, China, 100193, email: {\tt ylwang@csrc.ac.cn}},
~~Zhennan Zhou\thanks{Institute for Theoretical Sciences, Westlake University, Hangzhou, China, 310030, email: {\tt zhouzhennan@westlake.edu.cn}}.}
\begin{document}
\maketitle
\begin{abstract}
    In this paper, we focus on efficiently and flexibly simulating the Fokker-Planck equation associated with the Nonlinear Noisy Leaky Integrate-and-Fire (NNLIF) model, which reflects the dynamic behavior of neuron networks. We apply the Galerkin spectral method to discretize the spatial domain by constructing a variational formulation that satisfies complex boundary conditions. Moreover, the boundary conditions in the variational formulation include only zeroth-order terms, with first-order conditions being naturally incorporated. This allows the numerical scheme to be further extended to an excitatory-inhibitory population model with synaptic delays and refractory states. Additionally, we establish the consistency of the numerical scheme. Experimental results, including accuracy tests, blow-up events, and periodic oscillations, validate the properties of our proposed method.

\end{abstract}

\vspace*{4mm}
  \noindent {\bf Key words:}  nonlinear noisy leaky integrate-and-fire model; Fokker-Planck equation; neuron network; spectral-Galerkin method.

  \noindent {\bf Mathematics Subject Classification:} 35Q92; 65M70; 92B20

\section{Introduction}\label{sec:introduction}
With the development of neuroscience, the analysis of large-scale neuron network models has attracted significant attention. In recent years, an increasing number of researchers have begun to employ mathematical theories and tools to analyze the activity of simulated neurons. Various stochastic methods \cite{newhall2010cascade, newhall2010dynamics} are frequently used to simulate these mathematical models. However, the computational complexity escalates as the number of neurons in the neuron networks increases. To address this issue, researchers developed the mean-field theory \cite{nykamp2000population, dumont2020mean, renart2004mean}, which has been the foundation of a growing number of models \cite{dumont2016noisy, dumont2024oscillations, nykamp2001population} that utilize its principles.
	
One of the most classic models in computational neuroscience is the Nonlinear Noisy Leaky Integrate-and-Fire (NNLIF) model, which utilizes partial differential equations to describe the dynamic activity of a single neuron's membrane potential $V(t)$ with statistical description, such as the distribution and the mean firing rate. The NNLIF model captures the temporal evolution of the membrane potential, considering both its decay and reset mechanisms. In the absence of external stimuli, the membrane potential decays exponentially towards the resting potential $V_L$ and we assume $V_L=0$ for the sake of simplicity in this paper. Provided that neurons receive excitatory synaptic inputs from other neurons, the voltage of the neuron continues to increase. Once the membrane potential reaches the threshold voltage $V_F$, the membrane potential will not continue to increase, but instead will instantaneously reduce to the reset voltage $V_R$. This process will continuously repeat as the neuron responds to the synaptic inputs. 

To characterize the dynamic behavior of the neuron population, the evolution equation for the probability density function, also known as the Fokker-Planck equation, has been derived as in \cite{nykamp2000population, brunel1999fast, caceres2011analysis}
	\begin{equation}
		\label{eq:1}
		\begin{cases}
			\partial _t p(v,t)+\partial _v\big[h(v,N(t))p(v,t)\big]-a(N(t))\partial _{vv}p(v,t)=0,\quad v\in (-\infty ,V_F]/\left \{V_R\right\}, \\
			p(v,0)=p^0(v),\quad p(-\infty,t)=p(V_F,t)=0,\\
			p(V^-_R,t)=p(V^+_R,t),\quad \partial _v p(V^-_R,t)=\partial _v p(V^+_R,t)+\frac{N(t)}{a},
		\end{cases}
	\end{equation}
where
	\begin{equation}
		\label{eq:1.1}
		\begin{cases}
			N(t)=-a(N(t))\partial_vp(V_F,t), \\
			h(v,N(t))=-v+bN(t),\\
			a(N(t))=a_0+a_1N(t).
		\end{cases}
	\end{equation}
Here, the density function $p(v,t)$ represents for a given time $t$ the probability of finding a neuron at voltage $v$. The connectivity parameter $b$ is positive for excitatory networks and negative for inhibitory networks. And $h(v,N(t))$ is the drift coefficient and $a(N(t))$ is the diffusion coefficient. 
$N(t)$ is the mean firing rate of the neurons at time $t$, and it determines the magnitude of the flux shift, which results in the complicated boundary condition at $v=V_R$. The density function $p(v,t)$ satisfies the mass conservation 
	\begin{equation}
		\label{eq:mass conservation}
		\int_{-\infty}^{V_F}p(v,t)\mathrm{d}v=\int_{-\infty}^{V_F}p^0(v)\mathrm{d}v=1.
	\end{equation}
 In addition, we suppose the initial value $p^0(v)$ satisfies the following assumptions \cite{carrillo2013classical} 
 \begin{assumption}[Assumption $\mathbf{(H)}$]
 \label{thm:H}
 $p^0(v)\in C^1\big((-\infty,V_R)\cup(V_R,V_F]\big) \cap L^1\big((-\infty,V_R)\cup(V_R,V_F)\big)$, is non-negative with $p^0(V_F)=0$ and 
 \begin{equation}
     \lim_{v\to-\infty}\partial_v p^0(v)=0.
 \end{equation}
 \end{assumption}

In recent years, there have been a number of significant developments about the NNLIF model, including theoretical analysis and numerical simulation. In \cite{caceres2011analysis, carrillo2013classical}, the author investigates analytical properties of the solutions, steady states and finite time blow-up phenomenon for excitatory-average networks or inhibitory-average networks. Building upon this, multiple population models have been studied in \cite{caceres2016blow, Cáceres2018Analysis} in various aspects, including stationary solutions and global solution behavior. Additionally, some researchers focus on the variants of the NNLIF model, such as models with the refractory state, and synaptic delay \cite{Cáceres2018Analysis, caceres2019global}. Besides, there has been a growing interest in more complex solution behavior such that the emergence of cascade assemblies \cite{newhall2010cascade, newhall2010dynamics} and periodic oscillations \cite{brunel1999fast, brunel2000dynamics}, and the generalized solution of the NNLIF model has been proposed to allow blow-up events by introducing the dilated time scale \cite{dou2022dilating}, which may serve as a foundation tool for analyzing these complex dynamics in the PDE models.

Due to the non-linear drift and diffusion terms and the unique flux shift mechanism, it is challenging to investigate the Fokker-Planck equation \eqref{eq:1} with analytical techniques. Therefore, numerical methods are often desired to approximate the exact solution of the Fokker-Planck equation \eqref{eq:1}. More specifically, we seek an efficient and flexible numerical method that can not only handle the complex boundary conditions and nonlinear terms but is also expected to be applicable to various forms of the Fokker-Planck equations for neuron networks, including an excitatory-inhibitory populations model with synaptic delays and refractory states.

Various numerical methods have been proposed to simulate the NNLIF PDE model. In \cite{caceres2011numerical}, a finite difference method, which combines the WENO construction and the Chang-Cooper method, is applied to this model with the explicit third-order Runge-Kutta method. In \cite{sharma2019numerical}, a comparison was made between a finite element method using Euler’s backward difference scheme and the WENO finite difference method, with the FEM method showing potential advantages for nonlinear noisy LIF models. Additionally, a discontinuous Galerkin method for the variant model with delay and refractory periods is proposed in \cite{sharma2020discontinuous} with justified stability properties.

In recent years, developing structure-preserving methods for this model has appeared to be a new trend. A conservative and conditionally positivity-preserving scheme is proposed in \cite{hu2021structure} in the framework of the finite difference method, and the semi-discrete scheme is also proved to satisfy the discrete relative entropy estimation. Based on \cite{hu2021structure}, the authors propose an unconditionally positivity-preserving and asymptotic preserving numerical scheme in \cite{he2022structure} and a specialized multi-scale solver, integrating macroscopic and microscopic solvers, is presented in \cite{Du2024Synchronization} along with a rigorous numerical analysis for hybridizing both solvers.

To further reduce the computational complexity, there is also a trend to reduce the complex PDE dynamics to the hierarchy of moment equations, which naturally implies a computational tool. A moment method based on the maximum entropy principle is introduced in \cite{zhang2019coarse} to approximate the master equation of interacting neurons. In \cite{zhang2024spectral}, an asymptotic preserving numerical method in the framework of the spectral method without the above limitation is introduced, where the construction of the basis function can satisfy the dynamic boundary conditions exactly, and the spectral convergence is validated. However, unnecessary boundary conditions are imposed in its variational formulation, and the extension of the method in \cite{zhang2024spectral} in the current form is rather limited.



In this paper, we focus on developing a flexible spectral-Galerkin method for solving the Fokker-Planck equation \eqref{eq:1} in a semi-unbounded region. Spectral methods have demonstrated significant advantages for simulating the PDE model as shown in \cite{zhang2024spectral}, particularly in terms of spectral accuracy. That method in \cite{zhang2024spectral} primarily utilizes the Legendre spectral-Galerkin method to solve a specific variational formulation of the Fokker-Planck equation. In this variational formulation, three types of boundary conditions are imposed,  including the Dirichlet boundary conditions, the continuity condition, and the dynamical derivative conditions. However, due to the strict constraints imposed by the derivative conditions, it is challenging to extend such a method to models with more complex derivative boundary conditions, such as population models with synaptic delays and refractory periods. To overcome these limitations, we propose a Laguerre-Legendre spectral-Galerkin method (LLSGM) based on an alternative variational form. We reconstruct the weak formulation to only include the essential boundary conditions and the dynamical derivative boundary conditions are naturally implied. Consequently, the proposed numerical method can be readily extended to models with variant derivative boundary conditions. Additionally, we demonstrate the consistency of the numerical scheme with the Fokker-Planck equation, ensuring the reliability of the computational results.

We also carry out extensive numerical experiments with the proposed numerical method. We test the convergence in the temporal and spatial directions and the ability to capture the blow-up events in finite time. Additionally, we extend the proposed method to the excitatory-inhibitory populations with synaptic delays and refractory states and observe the periodic oscillations. The results indicate that the proposed method can be effectively applied to the variants of the Fokker-Planck equation \eqref{eq:1}. We stress that, in contrast to the spectral method in \cite{zhang2024spectral}, the variational formulation presented here includes only zeroth-order boundary conditions, while the derivative boundary conditions in \eqref{eq:1} are implicitly contained within the weak form, ensuring the general applicability of the method.

The organization of the paper is as follows. In Sec. \ref{sec:weak_form}, we introduce the weak solution and classical solution of the Fokker-Planck equation \eqref{eq:1}, and develop their conditional equivalence. In Sec. \ref{sec:scheme}, we construct the spectral approximation based on the weak solution, where we introduce the basis functions to discrete the spatial direction and the semi-implicit scheme employed for discretizing the temporal direction. Furthermore, the consistency of the numerical scheme is verified. In Sec. \ref{sec:two populations}, we extend our method to the two-populations model with synaptic delay and refractory period. In Sec. \ref{sec:numerical_test}, we present numerical numerical experiments, including the one-population model and the two-population models.

\section{Weak solution}\label{sec:weak_form}

In this section, we present a detailed description of both the weak solution and the classical solution to the PDE model \eqref{eq:1}-\eqref{eq:1.1}. Additionally, we demonstrate the equivalence between the weak solution and the classical solution under certain conditions, establishing a critical theoretical foundation for the proposed numerical method in later sections.

First of all, we consider the definition of classical solution as presented in \cite{liu2022rigorous}.

\begin{definition}[classical solution]\label{classical solution}
    $(p(v,t),N(t))$ is a classical solution of the system \eqref{eq:1}-\eqref{eq:1.1} in the time interval $[0,T]$ for any given $0<T<+\infty$ with Assmps. $\mathbf{(H)}$ \ref{thm:H} under the following conditions:
    \begin{itemize}
        \item[(1)] $N(t)$ is a continuous function for $t\in [0,T]$,
        \item[(2)] $p(v,t)$ is continuous in the region $\{(v,t):-\infty<v\le V_F, t\in[0,T]\}$,
        \item[(3)] $\partial_{vv}p(v,t)$ and $\partial_tp(v,t)$ are continuous in the region $\big\{(v,t): v\in (-\infty,V_R)\cup(V_R,V_F], t \in (0,T]\big\}$,
        \item[(4)] $\partial_vp(V_R^-,t)$, $\partial_vp(V_R^+,t)$ and $\partial_vp(V_F^-,t)$ are well defined for $t\in(0,T]$,
        \item[(5)] For $t\in (0,T]$, $\partial_vp(v,t)\to0$ where $v\to-\infty$,
        \item[(6)] $p(v,t)$ satisfies \eqref{eq:1}-\eqref{eq:1.1} and $p(v,0)=p^0(v)$ for $v\in (-\infty,V_R)\cup(V_R,V_F] $.
    \end{itemize}
\end{definition}
\noindent Here, the fifth condition involves the limit of the derivative at negative infinity. For simplicity, we will use the notation $ \partial_v p(-\infty,t)=0$ to denote 
\begin{equation}
    \lim_{v\to -\infty}\partial_v p(v,t)=0.
\end{equation}
Based on the classical solution $p(v,t)$ as defined in Def. \ref{classical solution}, we derive a weak solution by multiplying both sides of \eqref{eq:1} with a test function $\phi(v)\in C_0^{\infty}\big((-\infty,V_F]\big)$, where $v\partial_v\phi(v)$, $\partial_v \phi(v)$ and $v\phi(v)\in L^{\infty}\big((-\infty,V_F]\big)$. We then perform integration by parts to obtain
\begin{equation}
\label{eq:3}
    \begin{split}
	\int^{V_F}_{-\infty} \Big[\partial _t p(v,t)\phi(v)-h(v,N(t))p(v,t)\partial_v\phi(v)+a(N(t))\partial_vp(v,t)\partial_v\phi(v)\Big]\mathrm{d}v\\
	+\Big(h(v,N(t))p(v,t)\phi(v)\big|_{-\infty}^{V_R^-}+h(v,N(t))p(v,t)\phi(v)\big|_{V_R^+}^{V_F}\Big)\\
        -\Big(a(N(t))\partial_vp(v,t)\phi(v)\big|_{-\infty}^{V_R^-}+a(N(t))\partial_vp(v,t)\phi(v)\big|_{V_R^+}^{V_F}\Big)=0.
    \end{split}
\end{equation}
Defining the operator $ \mathcal{B}(p,\phi)$ as 
\begin{equation}
    \label{eq:integration_weak_sol}
    \mathcal{B}(p,\phi)=\int^{V_F}_{-\infty} \Big[\partial _t p(v,t)\phi(v)-h(v,N(t))p(v,t)\partial_v\phi(v)+a(N(t))\partial_vp(v,t)\partial_v\phi(v)\Big]\mathrm{d}v,
\end{equation}
then by applying the boundary conditions \eqref{eq:1.1}, \eqref{eq:3} reduces to
\begin{equation}
\label{eq:4}
    \begin{split}
        \mathcal{B}(p,\phi)
        +a(N(t))\partial _v p(V_F,t)\big[\phi(V_R)-\phi(V_F)\big]=0,
    \end{split}
\end{equation}
where $p(v,t)$ satisfies
\begin{equation}
\label{eq:6}
        p(-\infty,t)=0,\qquad p(V_F,t)=0,\qquad p(V^-_R,t)=p(V^+_R,t).
\end{equation}
Thus, the weak solutions are introduced formally as below. 

\begin{definition}[weak solution] \label{weak_solution}
    Define the trial function space 
    \begin{equation}
    \label{eq:trial_function_space}
        U(\Omega)=\Big\{p(v)\in \mathbb{H}^1(\Omega)~\big|~ p(V_F)=p(-\infty)=0,\, p(V_R^-)=p(V_R^+)\Big\},
    \end{equation}
    where $\Omega=(-\infty,V_R)\cup(V_R,V_F)$.
    Let $p(v,t)\in C^1\big([0,T];U(\Omega)\big)$, $N(t)\in C([0,T])$ and $p(v,0)=p^0(v)$ satisfying Assmps. $\mathbf{(H)}$ \ref{thm:H} for $v\in \Omega$. Additionally, assuming that $p(v,t)$ is differentiable at $V_F$, $V_R^+$ and $V_R^-$. 
    
    Then $(p(v,t),N(t))$ is a weak solution of \eqref{eq:1}-\eqref{eq:1.1} if for any test function $\phi(v)\in \mathbb{H}^1\big((-\infty,V_F]\big)$ with $v\partial_v\phi\in\mathbb{H}^1\big((-\infty,V_F]\big)$, $\phi(v)$ and $v\phi(v)\in L^{\infty}\big((-\infty,V_F]\big)$, the following equation
\begin{equation}
\label{eq:7}
\begin{split}
    \mathcal{B}(p,\phi)+a(N(t))\partial _v p(V_F,t)\big[\phi(V_R)-\phi(V_F)\big]=0
\end{split}
\end{equation}
is satisfied for all $t\in(0,T]$.
\end{definition} 
\noindent Here, $C^1\big([0,T];U(\Omega)\big)$ represents the space comprising all continuous functions $f(v,t)$ from $[0,T]$ to $U(\Omega)$ with 
\begin{equation}
\label{eq:7.1}
    \left \| f(t) \right \| _{C([0,T];X)} :=\max _{0 \leq t \leq T}\left \| f(t) \right  \|_X < \infty,
\end{equation}
and first derivatives are also continuous with
\begin{equation}
\label{eq:7.2}
    \left \| \partial_t f(t) \right \| _{C([0,T];X)} :=\max_{0\le t\le T}\left \| \partial_t f(t) \right  \|_X <\infty,
\end{equation}
where $X=U(\Omega)$. The set of bounded functions is denoted as $L^{\infty}\big((-\infty,V_F]\big)$. $\mathbb{H}^1\big((-\infty,V_F]\big)$ is used to denote the abbreviation of classical Sobolev space $W^{1,2}\big((-\infty,V_F]\big)$. It should be noted that the boundary conditions satisfied by the weak solution in Def. \ref{weak_solution} only include the zeroth-order boundary conditions, while the derivative boundary condition in \eqref{eq:1} is implied within the weak solution as a natural boundary condition. This treatment of boundary conditions simplifies the variational formulation of the problem \eqref{eq:1} by removing the need to enforce higher-order derivatives at the boundaries.

\begin{remark}
The model extension discussed in Sec. $\ref{sec:two populations}$ further supports the reasonableness of retaining only the zeroth-order boundary conditions as essential boundary conditions. This demonstrates that the numerical method based on the weak solution presented in Sec. $\ref{sec:scheme}$ is both practical and effective.
\end{remark}

Before constructing the numerical scheme, we first demonstrate the relationship between the weak solution in Def. \ref{weak_solution} and the classical solution in Def. \ref{classical solution}.

\begin{theorem}[Equivalence to the classical solution]
\label{Equivalence to the classical solution}
    Given $p(v,t)\in C^1\big([0,T];U(\Omega)\big)$, if $(p(v,t), N(t))$ is a classical solution of \eqref{eq:1}-\eqref{eq:1.1}, then $(p(v,t), N(t))$ is a weak solution. Conversely, if $(p(v,t), N(t))$ is a weak solution of \eqref{eq:1}-\eqref{eq:1.1} and $p(v,t)\in C^1\big((0,T];C^2((-\infty,V_R)\cup (V_R,V_F])\big)$, then $(p(v,t), N(t))$ is a classical solution of \eqref{eq:1}-\eqref{eq:1.1}.
\end{theorem}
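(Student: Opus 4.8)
The theorem has two directions, and the plan is to handle them separately since they rely on opposite manipulations of the same integration-by-parts identity.

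\emph{Forward direction (classical $\Rightarrow$ weak).} This is essentially a matter of verifying that the derivation leading to \eqref{eq:7} is valid for a classical solution. First I would take the strong equation \eqref{eq:1}, multiply by an admissible test function $\phi$, and integrate over $\Omega=(-\infty,V_R)\cup(V_R,V_F)$, splitting the integral at $V_R$. The regularity in Def.~\ref{classical solution}, conditions (2)--(4), guarantees that each integral and each boundary evaluation is well defined. The main technical points are the behavior at $v\to-\infty$ and the handling of the interface at $V_R$. At $-\infty$ I would use condition (5), namely $\partial_v p(-\infty,t)=0$, together with the hypotheses $\phi,\,v\phi\in L^\infty$ and the decay of $p$ (from $p\in U(\Omega)$, so $p(-\infty,t)=0$) to show the boundary terms at $-\infty$ vanish; the drift term $h(v,N)=-v+bN$ contributes a factor $v$, which is why one needs $v\phi$ and $v\partial_v\phi$ controlled. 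At $V_R$ I would substitute the jump conditions from \eqref{eq:1}, i.e. $p(V_R^-)=p(V_R^+)$ and $\partial_v p(V_R^-)=\partial_v p(V_R^+)+N(t)/a$, and at $V_F$ use $p(V_F,t)=0$ together with $N(t)=-a\,\partial_v p(V_F,t)$. Collecting the surviving boundary contributions should reproduce exactly the term $a(N)\partial_v p(V_F,t)\big[\phi(V_R)-\phi(V_F)\big]$, yielding \eqref{eq:7}.

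\emph{Converse direction (weak $\Rightarrow$ classical).} Here I assume \eqref{eq:7} holds for all admissible $\phi$ and the extra interior regularity $p\in C^1\big((0,T];C^2((-\infty,V_R)\cup(V_R,V_F])\big)$. The strategy is to \emph{undo} the integration by parts: starting from $\mathcal{B}(p,\phi)$ as defined in \eqref{eq:integration_weak_sol}, I integrate by parts back in the $-h p\,\partial_v\phi$ and $a\,\partial_v p\,\partial_v\phi$ terms, which is legitimate because $p$ is now $C^2$ on each subinterval. This produces the volume term $\int_\Omega\big[\partial_t p+\partial_v(hp)-a\partial_{vv}p\big]\phi\,\dd v$ plus a collection of boundary/interface terms. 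The first step is to test against $\phi\in C_0^\infty$ supported away from $V_R$ and $V_F$: by the fundamental lemma of the calculus of variations this forces the PDE \eqref{eq:1} to hold pointwise in the interior of each subinterval. With the volume term eliminated, what remains of \eqref{eq:7} is a linear combination of boundary values of $p$, $\partial_v p$, and $\phi$, $\partial_v\phi$ at $V_R^\pm$ and $V_F$.

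\emph{Recovering the boundary conditions.} The final and most delicate step is to extract the jump condition $\partial_v p(V_R^-)=\partial_v p(V_R^+)+N/a$ and the flux relation $N=-a\,\partial_v p(V_F)$ from the surviving boundary terms. The plan is to choose test functions that probe one degree of freedom at a time: selecting $\phi$ with prescribed but independent values of $\phi(V_R)$, $\phi(V_F)$ (recall $\phi$ need not vanish at these points, unlike the trial space $U(\Omega)$) isolates each interface coefficient, and the vanishing of each coefficient yields the corresponding derivative condition. The zeroth-order conditions $p(V_F)=p(-\infty)=0$ and $p(V_R^-)=p(V_R^+)$ come for free since they are built into the trial space $U(\Omega)$ in \eqref{eq:trial_function_space}. \textbf{The main obstacle} I anticipate is bookkeeping at $V_R$: because the weak form contains only the single combination $\phi(V_R)-\phi(V_F)$ rather than $\phi(V_R)$ and $\phi(V_F)$ separately, I must argue carefully that the natural derivative condition is encoded precisely in the coefficient $a(N)\partial_v p(V_F,t)$, and cross-check that this is consistent with the definition $N=-a\,\partial_v p(V_F)$; demonstrating that no spurious constraint is imposed (and conversely that the genuine jump condition is fully recovered) is the crux, and it is exactly the point the paper emphasizes when it says the derivative condition is a \emph{natural} boundary condition implied by the weak form.
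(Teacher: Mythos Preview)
Your overall strategy matches the paper's proof: integrate by parts back to recover the strong form plus boundary terms, then localize with carefully chosen test functions. The forward direction is fine and the paper treats it equally briefly.

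There is one genuine omission in your converse direction. After undoing the integration by parts, the surviving boundary contribution is not only at $V_R^\pm$ and $V_F$: there is also a term $-a(N(t))\,\partial_v p(-\infty,t)\,\phi(-\infty)$, and you never account for it. Condition~(5) of Definition~\ref{classical solution}, namely $\partial_v p(-\infty,t)=0$, is part of what must be recovered from the weak formulation, and your plan never mentions it. The paper handles this by a three-step localization: first take $\phi$ with $\phi(V_R)=\phi(-\infty)=0$ to kill all boundary terms and obtain the interior PDE; then take $\phi$ with $\phi(V_R)=0$ but $\phi(-\infty)$ free to isolate $a\,\partial_v p(-\infty,t)\,\phi(-\infty)=0$, which forces $\partial_v p(-\infty,t)=0$; finally use general $\phi$ to extract the jump condition $\partial_v p(V_R^-)-\partial_v p(V_R^+)+\partial_v p(V_F)=0$ from the coefficient of $\phi(V_R)$. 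You have the first and third of these steps but not the second.

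Two smaller points. First, after a single integration by parts the boundary terms involve only $\phi$, not $\partial_v\phi$, so your remark about ``$\partial_v\phi$ at $V_R^\pm$ and $V_F$'' is off. Second, the relation $N(t)=-a\,\partial_v p(V_F,t)$ is part of the data in \eqref{eq:1.1}, not a condition to be recovered; what the localization at $V_R$ yields is the single identity $\partial_v p(V_R^-)=\partial_v p(V_R^+)-\partial_v p(V_F,t)$, which combined with that definition of $N(t)$ gives the jump condition in \eqref{eq:1}. The ``obstacle'' you flag about $\phi(V_R)-\phi(V_F)$ not separating is a non-issue once you note that $\phi(V_F)$ has already been consumed in cancelling the $V_F$ boundary term from the diffusion part; the only free value left is $\phi(V_R)$, and its coefficient gives exactly the jump.
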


\begin{proof}

    First, it is obvious that if $p(v,t)\in C^1\big([0,T];U(\Omega)\big)$ and $(p(v,t),N(t))$ is the classical solution of \eqref{eq:1}-\eqref{eq:1.1}, then $(p(v,t),N(t))$ is also a weak solution. 

    Then, given that $p(v,t) \in C^1\big((0,T];C^2((-\infty,V_R)\cup (V_R,V_F])\big)$, it is suffices to verify if the weak solution $(p(v,t),N(t))$ satisfies the conditions in Def. \ref{classical solution}. It is easy to derive the conditions (1) - (4) in Def. \ref{classical solution} hold from Def. \ref{weak_solution}. Hence, it only remains to prove the conditions (5) - (6) in Def. \ref{classical solution} are valid. 
    
    It follows from \eqref{eq:trial_function_space} that 
    \begin{equation} \label{eq:BC}
        \big[h(v,N(t))p(v,t)\phi(v)\big]\big|_{-\infty}^{V_R^-}+\big[h(v,N(t))p(v,t)\phi(v)\big]\big|_{V_R^+}^{V_F}=0.
    \end{equation}
    Then using integration by parts and \eqref{eq:BC}, \eqref{eq:7} is rewritten into
    \begin{equation}
    \label{eq:10}
    \begin{split}
        \int_{-\infty}^{V_{F}} \mathcal{F}[p(v,t)]\phi(v) \mathrm{d}v+a(N(t))\phi(V_R)\big[\partial_vp(V_R^-,t)-\partial_vp(V_R^+,t)+\partial_vp(V_F,t)\big]\\
        -a(N(t))\partial_vp({-\infty},t)\phi({-\infty})=0,
    \end{split}
\end{equation}
where 
\begin{equation}
        \label{eq:FP_left}
        \mathcal{F}[p(v,t)]=\partial _t p(v,t)+\partial _v\big[h(v,N(t))p(v,t)\big]-a(N(t))\partial _{vv}p(v,t).
    \end{equation}
Taking
\begin{equation}
    \label{eq:11}
    \phi(v) \in \mathbb{W}_1=\left\{\phi(v)\in \mathbb{W}(-\infty,V_F):\phi(V_R)=\phi(-\infty)=0\right\},
\end{equation}
where 
\begin{equation}
    \mathbb{W}(-\infty,V_F) = \left\{\phi(v)\in \mathbb{H}^1(-\infty,V_F]:v\partial_v\phi\in \mathbb{H}^1(-\infty,V_F]\ \text{and}\ \phi,v\phi \in L^{\infty}(-\infty,V_F]\right\},
\end{equation}
hence, \eqref{eq:10} is reduced to
\begin{equation}
    \label{eq:12}
    \int_{-\infty}^{V_{F}} \mathcal{F}[p(v,t)]\phi(v) \mathrm{d}v=0.
\end{equation}
Due to the continuity of $\mathcal{F}[p(v,t)]$ and the arbitrariness of $\phi(v)$, we obtain
\begin{equation}
    \label{eq:13}
    \mathcal{F}[p(v,t)]=0.
\end{equation}
Next, taking 
\begin{equation}
    \label{eq:W_2_phi}
    \phi(v) \in \mathbb{W}_2=\left\{\phi(v)\in \mathbb{W}(-\infty,V_F):\phi(V_R)=0\right\}
\end{equation}
and substituting \eqref{eq:13} into \eqref{eq:10}, the following identity holds
\begin{equation}
    \label{eq:14}
    a(N(t))\partial_vp(-\infty,t)\phi(-\infty)=0,
\end{equation}
which implies
\begin{equation}
    \label{eq:15}
    \partial_vp(-\infty,t)=0.
\end{equation}
Finally, applying \eqref{eq:13} and \eqref{eq:15} to \eqref{eq:10}, we can derive 
\begin{equation}
    \label{eq:16}
    a(N(t))\phi(V_R)\left[\partial_vp(V_R^-,t)-\partial_vp(V_R^+,t)+\partial_vp(V_F,t)\right]=0.
\end{equation}
Hence, due to the arbitrariness of $\phi(v)$, it holds 
\begin{equation}
    \label{eq:17}
    \begin{split}
        \partial_vp(V_R^-,t)&=\partial_vp(V_R^+,t)-\partial_vp(V_F,t)\\
        &=\partial_vp(V_R^+,t)+\frac{N(t)}{a(N(t))}.
    \end{split}
\end{equation}
Consequently, from \eqref{eq:13}, \eqref{eq:15} and \eqref{eq:17}, we can deduce 
conditions (5) - (6) in Def. \ref{classical solution} hold, and the proof is completed.
    
\end{proof}

\section{Numerical scheme }\label{sec:scheme}

In this section, we develop a numerical scheme for the Fokker-Planck equation using a spectral method combined with semi-implicit time discretization to obtain a fully discrete scheme. 
Additionally, we demonstrate the consistency of the numerical scheme with the Fokker-Planck equation in the nonlinear case.

\subsection{Weak formulation and basis function}\label{sec:one population}

In this section, we introduce the weak formulation based on \eqref{eq:4}, and a set of basis functions is constructed. Based on \eqref{eq:4}, the variational formulation of \eqref{eq:1} is as bellow 
\begin{equation}
\label{eq:variational formula}
\begin{cases}
    \text{Find $p(v,t)\in W$ such that}\\
    \begin{aligned}
        \mathcal{B}(p,\phi)+a(N(t))\partial _v p(V_F,t)\left[\phi(V_R)-\phi(V_F)\right]=0,\quad \forall \phi \in W,
    \end{aligned}
\end{cases}
\end{equation}
where $W \subseteq U$ is the trial function space, with $U$ defined in \eqref{eq:trial_function_space}. Since we employ the Galerkin spectral method for spatial discretization, the test function space coincides with the trial function space $W$.


The trial function space $W$ is spanned by a set of basis functions, and one must carefully select function basis to handle the complex boundary conditions \eqref{eq:6} of the problem \eqref{eq:1}. Denoting the set of basis functions of the trial function space $W$ by $\left \{ \psi_k \right \} _{k=1}^{\infty }$, we can expand the approximate solution $p(v,t)\in W$ as
 \begin{equation}
     p(v,t)=\sum_{k=1}^{\infty}u_k(t)\psi_k(v).
 \end{equation}
To satisfy the continuity requirement of the density function $p(v,t)$ at $V_R$, we consider the use of a subset of basis functions designed to the boundary conditions \eqref{eq:6}, complemented by another subset designed to satisfy the homogeneous conditions. Specifically, let $W=W_1+W_2$, where any density function $p(v,t)\in W_1$ satisfies the boundary conditions \eqref{eq:6} and any $p(v,t)\in W_2$ satisfies the following homogeneous boundary conditions 
\begin{equation}
\label{eq:31}
			p(-\infty,t)=0, \qquad 
			p(V_F,t)=0, \qquad 
			p(V^-_R)=p(V^+_R)=0.
\end{equation}
This decomposition method enables us to handle complex boundary conditions more effectively while enhancing the flexibility of the numerical method. This flexibility primarily lies in the choice of basis functions in $W_1$, which address the boundary conditions, while $W_2$ is used to improve accuracy within the interval.

To keep the dimension of $W_1$ as low as possible, we observe that the four boundary values to consider from the boundary conditions \eqref{eq:6} are the values of the density function $p(v,t)$ at $V_F$, $V_R^+$, $V_R^-$, and negative infinity. However, there are only three conditions in the boundary conditions \eqref{eq:6}. Hence, only a single basis function $g_1(v)$ is needed. We can define $W_1$ as  
\begin{equation}
\label{eq:33}
		W_1=\mathrm{span}\left\{g_1(v)\right\},
\end{equation}
where the basis function should satisfy 
\begin{equation}
\label{eq:32}
		\begin{cases}
			g_1(-\infty)=0,\\
			g_1(V_R)=1,
		\end{cases}
		v\in(-\infty,V_R),
		\qquad
		\begin{cases}
			g_1(V_F)=0,\\
			g_1(V_R)=1,
		\end{cases}
		v\in(V_R,V_F].
\end{equation}
Specifically, we can determine a simple form for the basis function $g_1(v)$, which is a piecewise function composed of an exponentially decaying function and a linear function
\begin{equation}
\label{eq:34}
		g_1=
		\begin{cases}
			e^{-\beta(V_R-v)/2},\quad v\in(-\infty,V_R),\\
			\frac{v-V_F}{V_R-V_F},\quad v\in(V_R,V_F],
		\end{cases}
\end{equation}
where $\beta$ is an adjustable parameter. It should be noted that the basis function $g_1(v)$ can also be chosen in other forms.

Next, we proceed to the construction of $W_2$. Given the discontinuity of the interval $\Omega=(-\infty,V_R)\cup (V_R,V_F)$ at point $V_R$, handling homogeneous boundary conditions \eqref{eq:31} is non-trivial. To address this issue, we divide the interval $\Omega$ into two subintervals, namely $(-\infty,V_R)$ and $(V_R,V_F)$. Specifically, we define 
\begin{equation}
    W_2 \subseteq \hat{P}_{\infty}(-\infty,V_R)+P_{\infty}(V_R,V_F),
\end{equation}
where 
\begin{equation}
    \hat{P}_{\infty}(-\infty,V_R)=\left\{e^{-\frac{-v+V_R}{2}}f\big|f\in P_{\infty}(-\infty,V_R)\right\}
\end{equation}
and \( P_{\infty}(a,b) \) represents the set of all real polynomials over the interval \( (a,b) \). According to the homogeneous boundary conditions \eqref{eq:31}, the values of the density function $p(v,t)$ at the boundaries of the intervals \((-\infty, V_R)\) and \((V_R, V_F)\) should be zero. This implies that the basis functions must vanish at these boundary points. Thus, \( W_2 \) can be rewritten in the following form
\begin{equation}
\label{eq:35}
		W_2=\hat{X}_{(-\infty,V_R)}+X_{(V_R,V_F)},
\end{equation}
where 
\begin{equation}
\label{eq:36}
\begin{aligned}
\hat{X}_{(-\infty,V_R)}&=\big\{\varphi\in \hat{P}_{\infty}(-\infty,V_R):\varphi(-\infty)=\varphi(V_R)=0  \big\},\\
X_{(V_R,V_F)}&=\big\{\varphi\in P_{\infty}(V_R,V_F):\varphi(V_R)=\varphi(V_F)=0\big\}.
\end{aligned}
\end{equation}

Here, we consider the construction of basis functions in $\hat{X}_{(-\infty, V_R)}$ and $X_{(V_R, V_F)}$. Because $\hat{X}_{(-\infty, V_R)}$ is defined on a semi-infinite domain, we can utilize Laguerre functions to span $\hat{X}_{(-\infty, V_R)}$. The generalized Laguerre functions \cite{shen2011spectral} of degree $n$ are defined by
\begin{equation}
    \label{eq:laguerre function}
    \hat{\mathcal{\ell}}^{(\alpha)}_n(x)=e^{-x/2}\mathcal{\ell}^{(\alpha)}_n(x)=\frac{x^{-\alpha}e^{x/2}}{n!}\frac{\mathrm{d}^n}{\mathrm{d}x^n}(x^{n+\alpha}e^{-x}),\quad x\in \mathbb{R}_+,\quad \alpha>-1,\quad n\ge0,
\end{equation}
where $\mathcal{\ell} ^{(\alpha)}_n(x)$ is Laguerre polynomials.
These Laguerre functions are orthogonal with the weight function $\hat{\omega}_{\alpha}(x)=x^{\alpha}$, namely,
\begin{equation}
    \label{eq:laguerre orthogonal}
    \int_0^{+\infty}\hat{\mathcal{\ell}}^{(\alpha)}_n(x)\hat{\mathcal{\ell}}^{(\alpha)}_m(x)\hat{\omega}_{\alpha}(x)\mathrm{d}x=\gamma_n^{(\alpha)}\delta_{mn},
\end{equation}
where
\begin{equation}
    \label{eq:gamma}
    \gamma_n^{(\alpha)}=\frac{\Gamma(n+\alpha+1)}{n!}.
\end{equation}
The three-term recurrence relation of Laguerre functions is
\begin{align}
&\hat{\mathcal{\ell}}^{(\alpha)}_0(x)=e^{-x/2}, \quad \hat{\mathcal{\ell}}^{(\alpha)}_1(x)=(\alpha+1-x)e^{-x/2},\nonumber \\
&(n+1)\hat{\mathcal{\ell}}^{(\alpha)}_{(n+1)}(x)=(2n+\alpha+1-x)\hat{\mathcal{\ell}}^{(\alpha)}_n(x)-(n+\alpha)\hat{\mathcal{\ell}}^{(\alpha)}_{(n-1)}(x).\label{eq:three-term relation}
\end{align}
Since $\hat{\ell}^{(\alpha)}_n(x)$ is defined on $(0,+\infty)$, we construct the basis function $h_k^L(v)$ defined on $(-\infty,V_R)$ with a coordinate transformation
\begin{equation}
\label{eq:37}
		h_k^L(v)=\hat{\mathcal{\ell}}_k^{(0)}(-v+V_R)+a_1\hat{\mathcal{\ell}}_{k+1}^{(0)}(-v+V_R).
\end{equation}
where
\begin{equation}
\label{eq:La at 0}
    \hat{\mathcal{\ell}}_n^{(0)}(0)=1,\quad \lim_{v\to-\infty}\hat{\mathcal{\ell}}_n^{(0)}(v)=0.
\end{equation}
Because the solution satisfies $p(-\infty,t)=p(V_R^-,t)=0$, basis functions $h_k^L(v)$ should satisfy the following conditions
\begin{equation}
        h_k^L(-\infty)=0,\qquad
        h_k^L(V_R^-)=0.
\end{equation}
Therefore, we get $a_1=-1$.

For the bounded region on the right side of \( V_R \), we employ basis functions that differ from those used in the left interval \((-\infty, V_R)\). Specifically, Legendre polynomials \( L_k(v) \) are chosen to construct the basis functions on the interval \( (V_R, V_F) \). Since the domain of Legendre polynomials is \( [-1, 1] \), a coordinate transformation
\begin{equation}
    x(v)= \frac{v-\frac{V_F+V_R}{2}}{\frac{V_F-V_R}{2}},\quad v\in(V_R,V_F)
\end{equation}
is necessary.
Furthermore, the basis functions in the interval \( (V_R, V_F) \) are given by 
\begin{equation}
\label{eq:38}
    h_k^R(v)=\mathcal{H}_k(v)+b_1\mathcal{H}_{k+1}(v)+b_2\mathcal{H}_{k+2}(v),
 \end{equation}
 where
 \begin{equation}
     \mathcal{H}_k(v)= L_k(x).
 \end{equation}
Similarly, from \eqref{eq:35} and \eqref{eq:36}, we obtain 
\begin{equation}
 \label{eq:38.1}
        h_k^R(V_R^+)=0,\quad h_k^R(V_F)=0.
\end{equation} 
By substituting \eqref{eq:38.1} into \eqref{eq:38}, we get that \( b_1 = 0 \) and \( b_2 = -1 \).

In conclusion, we have designed a suitable set of basis functions for the trial function space $W$ as follows
 \begin{equation}
 \label{eq:39}
    \left\{\psi_k\right\}^{\infty}_{k=1}=\left\{g_1,h_0^L,h_1^L,\cdots,h_{M-1}^L,\cdots,h_0^R,h_1^R,\cdots,h_{M-1}^R,\cdots\right\}.
\end{equation}
Then the solution of \eqref{eq:variational formula} can be written as 
\begin{equation}
\label{eq:41}
		p(v,t)=\sum_{k=1}^{\infty}u_k(t)\psi_k(v).
\end{equation}

\subsection{Fully discrete scheme}
\label{sec:Fully_dis}
In this section, we introduce a Laguerre-Legendre spectral-Galerkin method (LLSGM) for the spatial discretization of the Fokker-Planck equation \eqref{eq:1}, transforming it into a system of ordinary differential equations (ODEs) in time. We then derive the fully discrete scheme for the weak formulation in two steps. First, we construct a finite-dimensional trial function space and apply the spectral method for spatial discretization. In the second step, a semi-implicit method is used for temporal discretization, resulting in a nonlinear fully discrete scheme.


To obtain the finite-dimensional trial function space \( W \), we denote the approximation space $W_M$ as
\begin{equation}
    W_M=W_1+W_2^M,
\end{equation}
where \( W_1 \) is a one-dimensional space and $W_2^M$ is the truncated space of $W_2$. From this, we derive the following form of $W_2^M$ using \eqref{eq:35} and \eqref{eq:36}
\begin{equation}
\label{eq:35.1}
		W_2^M=\hat{X}_{M(-\infty,V_R)}+X_{M(V_R,V_F)},
\end{equation}
where 
\begin{equation}
\label{eq:36.1}
\begin{aligned}
    \hat{X}_{M(-\infty,V_R)}&=\big\{\varphi\in \hat{P}_{M}(-\infty,V_R):\varphi(-\infty)=\varphi(V_R)=0  \big\},\\
    X_{M(V_R,V_F)}&=\big\{\varphi\in P_{M}(V_R,V_F):\varphi(V_R)=\varphi(V_F)=0\big\}.
\end{aligned}
\end{equation}
The notation $\hat{P}_{M}(-\infty,V_R)$ represents
\begin{equation}
    \hat{P}_{M}(-\infty,V_R)=\left\{e^{-\frac{-v+V_R}{2}}f~\big|~f\in P_{M}(-\infty,V_R)\right\}
\end{equation}
and $P_M(a,b)$ denotes the set of all real polynomials of degree at most $M$ in the interval $(a,b)$. In $W_1$, we have identified that a single basis function $g_1(v)$ satisfies the boundary conditions \eqref{eq:32}. Therefore, the dimension of the approximation space  $W_M$ is $2M+1$. Then the numerical solution $p_M(v,t)$ can be expressed as
\begin{equation}
\label{eq:41.1}
		p_M(v,t)=\sum_{k=1}^{2M+1}\hat{u}_k(t)\psi_k(v),
\end{equation}
where 
 \begin{equation}
 \label{eq:dis_basis}
    \left\{\psi_k\right\}^{2M+1}_{k=1}=\left\{g_1,h_0^L,h_1^L,\cdots,h_{M-1}^L,h_0^R,h_1^R,\cdots,h_{M-1}^R\right\}.
\end{equation}

According to the spectral-Galerkin method in \cite{shen1994efficient}, the variational formulation obtained from \eqref{eq:variational formula} is as bellow 
\begin{equation}
\label{eq:galerkin approximation}
    \begin{cases}
        \text{Find $p_M(v,t)\in W_M$ such that}\\
	\begin{aligned}\
        \mathcal{B}(p_M,\phi)+a(N(t))\partial_vp_M(V_F,t)\left[\phi(V_R,t)-\phi(V_F,t)\right]=0,\quad \forall \phi \in W_M,
	\end{aligned}
    \end{cases}
\end{equation}
where $W_M$ is the subspace of $W$. Substituting the numerical solution \eqref{eq:41.1} to the variational formulation \eqref{eq:galerkin approximation}, we obtain the semi-discrete form
\begin{equation}
\label{eq:42.1}
		H\frac{\mathrm{d}\hat{\boldsymbol{u}}}{\mathrm{d}t}+A\hat{\boldsymbol{u}}-bN(t)B\hat{\boldsymbol{u}}+a(N(t))C\hat{\boldsymbol{u}}+a(N(t))D\hat{\boldsymbol{u}}=0,
\end{equation}
where
\begin{equation}
\label{eq:43.1}
\begin{split}
&\hat{\boldsymbol{u}}=\big( \hat{u}_1(t),\hat{u}_2(t),\hat{u}_3(t),...,\hat{u}_{2M+1}(t) \big)^T,\\
&H_{jk}=\int_{-\infty}^{V_F} \psi_k\psi_j\mathrm{d}v,\quad A_{jk}=\int_{-\infty}^{V_F} v\psi_k\partial_v\psi_j\mathrm{d}v,\\
&B_{jk}=\int_{-\infty}^{V_F} \psi_k\partial_v\psi_j\mathrm{d}v,\quad C_{jk}=\int_{-\infty}^{V_F} \partial_v\psi_k\partial_v\psi_j\mathrm{d}v,\\
&D_{jk}=\partial_v\psi_k(V_F)\left[\psi_j(V_R)-\psi_j(V_F)\right],\quad j,k=1,2,\cdots,2M+1.
\end{split}
\end{equation}
The mean firing rate $N(t)$ is obtained by 
\begin{equation}
    \label{eq:N_one_pop}
    N(t)=-a(N(t))\sum_{k=1}^{2M+1} \hat{u}_k(t)\partial_v\psi_k(V_F).
\end{equation}
The initial coefficients can be determined by the projection method,
\begin{equation}
\label{eq:numerical initial}
    \int_{-\infty}^{V_F}\sum_{k=1}^{2M+1}\hat{u}_k(0)\psi_k(v)\psi_j(v)\mathrm{d}v=\int_{-\infty}^{V_F}p^0(v)\psi_j(v)\mathrm{d}v,\quad j=1,2,\cdots,2M+1.
\end{equation}

To achieve a balance between stability and efficiency in temporal discretization, we employ a semi-implicit scheme, a technique well-suited for spectral methods. The linear part of the equation is treated implicitly to enhance stability, while the nonlinear part is treated explicitly to avoid the complexity of solving nonlinear equations. Then the fully discretized form of the variational formulation \eqref{eq:galerkin approximation} is as follows
\begin{equation}
\label{eq:44}
		\left(\frac{1}{\Delta t}H+A-bN^nB+a^nC+a^nD\right)\hat{\boldsymbol{u}}^{n+1}=\frac{1}{\Delta t}H\hat{\boldsymbol{u}}^n,
\end{equation}
where 
\begin{equation}
    N^n=-a(N^n)\sum_{k=1}^{2M+1} \hat{u}^n_k\partial_v\psi_k(V_F).
\end{equation}

\subsection{Consistency of the numerical scheme}

In the above section, we have derived the variational formulation and developed a numerical scheme for the Fokker-Planck equation \eqref{eq:1}. Note that the exact solution does not satisfy the numerical scheme \eqref{eq:galerkin approximation} when it is truncated to the same order. However, we shall show that it satisfies the numerical scheme \eqref{eq:galerkin approximation} when the dimension of the approximation
space goes to infinity.

It is clear that 
    \begin{equation}
    \label{c1}
        p(v,t)=\sum_{k=1}^{\infty}u_k(t)\psi_k(v)
    \end{equation}
    satisfies the variational formulation \eqref{eq:variational formula}. Define
            \begin{align}
                \mathcal{L}(p,\phi) =
                \mathcal{B}(p,\phi)+a(N(t))\partial _v p(V_F,t)\left[ \phi(V_R)-\phi(V_F) \right],
        \end{align}
    we have 
    \begin{equation}
        \mathcal{L}(p,\phi)=0,\quad \forall \phi \in W.
    \end{equation}
    From the variational formulation \eqref{eq:galerkin approximation}, we have
    \begin{equation}
        \mathcal{L}(p_{M},\phi)=0,\quad \forall \phi \in W_{M},
    \end{equation}
    where
    \begin{equation}
        p_M=\sum_{k=1}^{2M+1}\hat{u}_k(t)\psi_k(v).
    \end{equation}
    Obviously, provided that we truncate the exact solution $p(v,t)$ in \eqref{c1} to 
    \begin{equation}
    \label{c2}
        \Tilde{p}_M=\sum_{k=1}^{2M+1}u_k(t)\psi_k(v),
    \end{equation}
    then
    \begin{equation}
        p(v,t)=\sum_{k=1}^{\infty}u_k(t)\psi_k(v)=\Tilde{p}_M+\sum_{k=2M+2}^{\infty}u_k(t)\psi_k(v).
    \end{equation}
    It is straightforward that $\Tilde{p}_M$ does not satisfy the variational formulation \eqref{eq:galerkin approximation}. Specifically, the coefficients $u_k$ from  $\Tilde{p}_M$ are the same as those in \eqref{c1} for the first $2M+1$ terms, but differ from the coefficients $\hat{u}_k$ calculated from the variational formulation \eqref{eq:galerkin approximation}, as shown in \eqref{eq:41.1}. Therefore, 
    \begin{equation}
        \mathcal{L}(\Tilde{p}_{M},\phi)\ne 0,\quad \forall \phi \in W_{M}.
    \end{equation}
    However, we claim that $\Tilde{p}_M$ satisfies the variational formulation \eqref{eq:galerkin approximation} in the limit as $M\to+\infty$. Namely, for any fixed $M_1>0$ and any test function $\phi_i \in W_{M_1}$, $M>M_1$
    \begin{equation}
    \label{eq:consistency}
           \begin{aligned}
                \mathcal{L}(\Tilde{p}_{M},\phi_i)=
                \mathcal{B}(\Tilde{p}_M,\phi_i)+\Tilde{a}(\Tilde{N}(t))\partial_v\Tilde{p}_{M}(V_F,t)\left[\phi_i(V_R,t)-\phi_i(V_F,t)\right]\to0,
           \end{aligned}
    \end{equation}
    where
    \begin{equation}
    \label{eq:consisency coeff}
		\begin{cases}
			\Tilde{N}(t)=-\Tilde{a}(\Tilde{N}(t))\partial_v \Tilde{p}_M(V_F,t), \\
			\Tilde{h}(v,\Tilde{N}(t))=-v+b\Tilde{N}(t),\\
			\Tilde{a}(\Tilde{N}(t))=a_0+a_1\Tilde{N}(t).
		\end{cases}
    \end{equation}
    Let us denote the above assertion as the consistency of the numerical scheme \eqref{eq:galerkin approximation}. 
    Consequently, we can summarize the above discussion as the following theorem.
    
\begin{theorem}[Consistency]
    Assume that
    \begin{equation}
    \label{eq:c1}
    \begin{aligned}
        \lim_{M\to+\infty}\left\| p-\Tilde{p}_M \right\|_{L^2(-\infty,V_F]}=0,\\
        \lim_{M\to+\infty}\left\| \partial_t(p-\Tilde{p}_M) \right\|_{L^2(-\infty,V_F]}=0,\\
        \lim_{M\to+\infty}\left\| \partial_v(p-\Tilde{p}_M) \right\|_{L^{\infty}(V_R,V_F]}=0,
    \end{aligned}
    \end{equation}
    then the numerical scheme \eqref{eq:galerkin approximation} is consistent with the differential equation \eqref{eq:1}, which means that \eqref{eq:consistency}-\eqref{eq:consisency coeff} hold as $M \to\infty$.
\end{theorem}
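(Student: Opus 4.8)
The plan is to exploit that the exact solution already satisfies the continuous variational identity, $\mathcal{L}(p,\phi_i)=0$, so that proving $\mathcal{L}(\tilde p_M,\phi_i)\to 0$ reduces to controlling the difference $\mathcal{L}(\tilde p_M,\phi_i)-\mathcal{L}(p,\phi_i)$. I would expand this difference into four pieces according to the structure of $\mathcal{L}$: a time-derivative term $I_1=\int_{-\infty}^{V_F}\partial_t(\tilde p_M-p)\phi_i\,\mathrm{d}v$, a drift term $I_2$ coming from $h$, a diffusion term $I_3$ coming from $a\,\partial_v$, and the boundary term $I_4=\big[\tilde a(\tilde N)\partial_v\tilde p_M(V_F)-a(N)\partial_vp(V_F)\big]\big[\phi_i(V_R)-\phi_i(V_F)\big]$. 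Each test function $\phi_i\in W_{M_1}$ is a fixed finite combination of $g_1$, the Laguerre functions $h_k^L$ and the Legendre functions $h_k^R$, so $\phi_i$, $\partial_v\phi_i$, $v\partial_v\phi_i$ and $\partial_{vv}\phi_i$ all lie in $L^2(-\infty,V_F]$ (the second derivative understood piecewise on the two subintervals), a fact I would use repeatedly.

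A preliminary step is to establish $\tilde N(t)\to N(t)$, hence $\tilde a(\tilde N)\to a(N)$. Writing $q_M=\partial_v\tilde p_M(V_F)$ and $q=\partial_vp(V_F)$, the implicit relations in \eqref{eq:consisency coeff} and \eqref{eq:1.1} give $\tilde N=-a_0q_M/(1+a_1q_M)$ and $N=-a_0q/(1+a_1q)$; since the third hypothesis in \eqref{eq:c1} yields $q_M\to q$ (pointwise convergence at $V_F$ following from $L^{\infty}$ convergence on $(V_R,V_F]$) and the denominators remain bounded away from zero for the well-posed solution, continuity gives the claim, and in particular $\tilde N$ is uniformly bounded for large $M$. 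With this, $I_1$, $I_2$ and $I_4$ are routine. For $I_1$, Cauchy--Schwarz gives $|I_1|\le\|\partial_t(\tilde p_M-p)\|_{L^2(-\infty,V_F]}\|\phi_i\|_{L^2(-\infty,V_F]}\to0$ by the second hypothesis. For $I_2$, I would expand $\tilde h(v,\tilde N)\tilde p_M-h(v,N)p=-v(\tilde p_M-p)+b\tilde N(\tilde p_M-p)+b(\tilde N-N)p$ and bound the three integrals against $\partial_v\phi_i$ using, respectively, $\|v\partial_v\phi_i\|_{L^2}$ with the first hypothesis, the boundedness of $\tilde N$ with the first hypothesis, and $\tilde N-N\to0$ with $\|p\|_{L^2}<\infty$. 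For $I_4$, the limits $q_M\to q$ and $\tilde a(\tilde N)\to a(N)$ with the fixed values $\phi_i(V_R),\phi_i(V_F)$ give $I_4\to0$ directly.

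The main obstacle is the diffusion term $I_3$. Splitting $\tilde a(\tilde N)\partial_v\tilde p_M-a(N)\partial_vp=\tilde a(\tilde N)\partial_v(\tilde p_M-p)+(\tilde a(\tilde N)-a(N))\partial_vp$, the second summand is bounded by $|\tilde a(\tilde N)-a(N)|\,\|\partial_vp\|_{L^2}\|\partial_v\phi_i\|_{L^2}\to0$. The delicate piece is $\tilde a(\tilde N)\int_{-\infty}^{V_F}\partial_v(\tilde p_M-p)\partial_v\phi_i\,\mathrm{d}v$, because the hypotheses control $\partial_v(\tilde p_M-p)$ only in $L^{\infty}(V_R,V_F]$ and say nothing about its norm on the semi-infinite branch. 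I would split the integral at $V_R$. On $(V_R,V_F)$ the $L^{\infty}$ hypothesis bounds the integral by $\|\partial_v(\tilde p_M-p)\|_{L^{\infty}(V_R,V_F]}\|\partial_v\phi_i\|_{L^1(V_R,V_F)}\to0$. On $(-\infty,V_R)$, where no derivative control is available, I would integrate by parts to move the derivative onto $\phi_i$, obtaining $-\int_{-\infty}^{V_R}(\tilde p_M-p)\partial_{vv}\phi_i\,\mathrm{d}v$ plus the boundary contribution $(\tilde p_M-p)\partial_v\phi_i\big|_{-\infty}^{V_R^-}$.

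The crucial structural observation, which I expect to be the linchpin, is that this boundary term vanishes: at $-\infty$ both densities decay, while at $V_R$ every basis function except $g_1$ is zero and the truncation leaves the coefficient $u_1$ of $g_1$ unchanged, so $\tilde p_M(V_R)=u_1=p(V_R)$ and $(\tilde p_M-p)(V_R^-)=0$. The remaining integral is then bounded by $\|\tilde p_M-p\|_{L^2(-\infty,V_R)}\|\partial_{vv}\phi_i\|_{L^2(-\infty,V_R)}\to0$ by the first hypothesis, which completes the argument. In short, the absence of a derivative hypothesis on the unbounded branch is precisely what forces the integration-by-parts step, and the vanishing of the $V_R$ boundary term, a direct consequence of the basis construction in Section \ref{sec:one population}, is what makes that step succeed.
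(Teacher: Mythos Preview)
Your overall strategy coincides with the paper's: subtract the exact variational identity $\mathcal{L}(p,\phi_i)=0$, decompose the resulting difference into time-derivative, drift, diffusion and boundary pieces, and show each tends to zero using \eqref{eq:c1} together with the convergence $\tilde N\to N$ deduced from the $L^{\infty}(V_R,V_F]$ hypothesis on $\partial_v(\tilde p_M-p)$. The paper carries out the same skeleton, splitting into seven summands $I_1,\dots,I_7$ after inserting and subtracting intermediate quantities $\tilde h p$ and $\tilde a\,\partial_vp$; your four-term decomposition is a cosmetic regrouping of this.

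The one substantive difference is your treatment of the diffusion piece on the semi-infinite branch. The paper bounds its $I_4$ directly by $\|\partial_v(\tilde p_M-p)\|_{L^2(-\infty,V_F]}$, a quantity not among the stated hypotheses \eqref{eq:c1}, and declares convergence without further comment. You instead split at $V_R$, use the $L^{\infty}(V_R,V_F]$ assumption on the right, and on $(-\infty,V_R)$ integrate by parts to trade $\partial_v(\tilde p_M-p)$ for $\tilde p_M-p$ against $\partial_{vv}\phi_i$, observing that the boundary term at $V_R^-$ vanishes because only $g_1$ among the basis functions is nonzero there and its coefficient $u_1$ is untouched by truncation. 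This step is correct (each $\phi_i\in W_{M_1}$ indeed has $\partial_{vv}\phi_i\in L^2(-\infty,V_R)$, being a finite combination of $g_1$ and Laguerre functions), and it closes the argument using only the assumptions actually listed in the theorem. In that sense your route is the more careful one; the paper's shortcut tacitly presumes an $L^2$ derivative bound on the full line that the theorem does not provide.
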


\begin{proof}
    Denoting $\sum\limits_{k=2M+2}^{\infty}u_k(t)\psi_k(v)$ as $R_M(v,t)$, then we rewrite $p(v,t)$ as
    \begin{equation}
    \label{eq:c2}
        p(v,t)=\Tilde{p}_M(v,t)+R_M(v,t).
    \end{equation}
  Here, our goal is to show 
    \begin{equation}
    \label{eq:51}
    \lim_{M\to+\infty}\mathcal{L}(\Tilde{p}_{M},\phi_i)=0,\quad \phi_i\in W_{M_1}.
    \end{equation}
    First, we observe that 
    \begin{equation}
        \mathcal{L}(p,\phi_i)=0, \quad \phi_i\in W_{M_1},
    \end{equation}
    because of $W_{M_1}\subseteq W$ from the variational formulation \eqref{eq:variational formula}. By subtracting this identity, we get
    \begin{equation}
        \label{eq:c4}
        \begin{aligned}
            \mathcal{L}(\Tilde{p}_{M},\phi_i)=&\mathcal{L}(\Tilde{p}_{M},\phi_i) - \mathcal{L}(p,\phi_i)\\
        =&\int_{-\infty}^{V_F} \partial_t(\Tilde{p}_M-p)\phi_i+(hp-\Tilde{h}\Tilde{p}_M)\partial_v\phi_i+(\Tilde{a}\partial_v\Tilde{p}_M-a\partial_vp)\partial_v\phi_i \mathrm{d}v \\ 
            &+\big[\Tilde{a}\partial_v\Tilde{p}_M(V_F,t)-a\partial_vp(V_F,t)\big]\big[\phi_i(V_R)-\phi_i(V_F)\big].
        \end{aligned}
    \end{equation}
    To facilitate the comparison of the terms in \eqref{eq:c4}, we introduce intermediate terms as follows
    \begin{equation}
    \label{eq:c6}
    \begin{aligned}
        \mathcal{L}(\Tilde{p}_{M},\phi_i) =& \int_{-\infty}^{V_F} \partial_t(\Tilde{p}_M-p)\phi_i\,\mathrm{d}v 
        + \int_{-\infty}^{V_F} \big(hp - \Tilde{h} p + \Tilde{h} p - \Tilde{h} \Tilde{p}_M\big) \partial_v \phi_i \,\mathrm{d}v \\
        &+ \int_{-\infty}^{V_F} \big(\Tilde{a} \partial_v \Tilde{p}_M - \Tilde{a} \partial_v p + \Tilde{a} \partial_v p - a \partial_v p\big) \partial_v \phi_i \,\mathrm{d}v \\
        &+ \big[\Tilde{a} \partial_v \Tilde{p}_M(V_F,t) - \Tilde{a} \partial_v p(V_F,t) + \Tilde{a} \partial_v p(V_F,t) - a \partial_v p(V_F,t)\big] \left[\phi_i(V_R) - \phi_i(V_F)\right].
    \end{aligned}
\end{equation}
    After simplification, we group the consistency error terms and obtain
    \begin{equation}
    \label{eq:c7.1}
    \begin{aligned}
        \mathcal{L}(\Tilde{p}_{M}, \phi_i) = \sum_{k=1}^{7}I_k,
    \end{aligned}
\end{equation}
where   
    \begin{equation}
    \label{eq:c7}
    \begin{aligned}
        I_1&=\int_{-\infty}^{V_F} \partial_t(\Tilde{p}_M - p) \phi_i \mathrm{d}v ,\quad
        I_2= \int_{-\infty}^{V_F} (h - \Tilde{h}) p \partial_v \phi_i \mathrm{d}v ,\quad
        I_3= \int_{-\infty}^{V_F} \Tilde{h} (p - \Tilde{p}_M) \partial_v \phi_i \mathrm{d}v ,\\
        I_4&= \int_{-\infty}^{V_F} \Tilde{a} (\partial_v \Tilde{p}_M - \partial_v p) \partial_v \phi_i  \mathrm{d}v ,\quad
        I_5= \int_{-\infty}^{V_F} (\Tilde{a} - a) \partial_v p \partial_v \phi_i \mathrm{d}v ,\\
        I_6&= \Tilde{a} \big[\partial_v \Tilde{p}_M(V_F, t) - \partial_v p(V_F, t)\big] \left[\phi_i(V_R) - \phi_i(V_F)\right] ,\\
        I_7&= (\Tilde{a} - a) \partial_v p(V_F, t) \left[\phi_i(V_R) - \phi_i(V_F)\right].
    \end{aligned}
\end{equation}
    Since
    \begin{equation}
        \begin{aligned}
            N(t)=-a\partial_vp(V_F,t)=-\big(a_0+a_1N(t)\big)\partial_vp(V_F,t),
        \end{aligned}
    \end{equation}
    we have
    \begin{equation}
    \label{eq:c8}
        N(t)=\frac{-a_0\partial_vp(V_F,t)}{1+a_1\partial_vp(V_F,t)}.
    \end{equation}
    Therefore, defining 
    \begin{equation}
    \label{eq:c9}
        \Tilde{N}(t)=\frac{-a_0\partial_v\Tilde{p}_M(V_F,t)}{1+a_1\partial_v\Tilde{p}_M(V_F,t)}.
    \end{equation}
it holds that 
    \begin{equation}
    \label{eq:c10}
        \begin{aligned}
            h-\Tilde{h}&=\frac{ba_0\partial_v\Tilde{p}_M(V_F,t)-\partial_vp(V_F,t)}{\big(1+a_1\partial_vp(V_F,t)\big)\big(1+a_1\partial_v\Tilde{p}_M(V_F,t)\big)},\\
            \Tilde{a}-a&=\frac{a_1a_0\partial_vp(V_F,t)-\partial_v\Tilde{p}_M(V_F,t)}{\big(1+a_1\partial_vp(V_F,t)\big)\big(1+a_1\partial_v\Tilde{p}_M(V_F,t)\big)}.
        \end{aligned}
    \end{equation}
    Hence, from the error estimation \eqref{eq:c1}, there exists $M_0>0$ and $M_0^{'}>0$ such that
    \begin{equation}
        \begin{aligned}
        \left\| h-\Tilde{h} \right\|_{L^{\infty}(V_R,V_F]}&\le M_0 \left\| (\partial_v(\Tilde{p}_M-p)(V_F,t) \right\|_{L^{\infty}(V_R,V_F]},\\
        \left\| \Tilde{a}-a \right\|_{L^{\infty}(V_R,V_F]}&\le M^{'}_0 \left\| (\partial_v(p-\Tilde{p}_M)(V_F,t) \right\|_{L^{\infty}(V_R,V_F]}.
        \end{aligned}
    \end{equation}
    Namely,
    \begin{equation}
    \label{eq:c13}
        \begin{aligned}
            \lim_{M\to+\infty}\left\| h-\Tilde{h} \right\|_{L^{\infty}(V_R,V_F]}&=0,\\
            \lim_{M\to+\infty}\left\| \Tilde{a}-a \right\|_{L^{\infty}(V_R,V_F]}&=0.
        \end{aligned}
    \end{equation}
    Hence, it is easy to prove that
    \begin{equation}
    \begin{aligned}
        \label{eq:c15}
        &|I_6|=\big|\Tilde{a}\left[\partial_v\Tilde{p}_M(V_F,t)-\partial_vp(V_F,t)\right]\big|\le  |\Tilde{a}|\left\| (\partial_v(\Tilde{p}_M-p)(V_F,t) \right\|_{L^{\infty}(V_R,V_F]},\\
        &|I_7|=\big|(\Tilde{a}-a)\partial_vp(V_F,t)\big|\le \left\| \Tilde{a}-a \right\|_{L^{\infty}(V_R,V_F]}\big|\partial_vp(V_F,t)\big|,
    \end{aligned}
    \end{equation}
    which implies that
    \begin{equation}
        \label{eq:I67_to_0}
        \lim_{M\to+\infty}|I_6|=0,\quad \lim_{M\to+\infty}|I_7|=0.
    \end{equation}
    From the definition of weak solution, we can know that $\phi_i(v)\in H^1((-\infty, V_F])$. From the Holder's inequality, we get
    \begin{equation}
    \label{c17}
        \begin{aligned}
            |I_1|&=\left|\int_{-\infty}^{V_F} \partial_t(\Tilde{p}_M-p)\phi_i\mathrm{d}v\right|
            \le \left(\int_{-\infty}^{V_F} \big(\partial_t(\Tilde{p}_M-p)\big)^2\mathrm{d}v\right)^{\frac{1}{2}}\left(\int_{-\infty}^{V_F}\phi_i^2\mathrm{d}v\right)^{\frac{1}{2}}\\
            &= \left\| \partial_t(\Tilde{p}_M-p) \right\|_{L^2(-\infty,V_F]}\left(\int_{-\infty}^{V_F}\phi_i^2\mathrm{d}v\right)^{\frac{1}{2}}.
        \end{aligned}
    \end{equation}
    Therefore, we obtain
    \begin{equation}
        \label{eq:I1_to_0}
        \lim_{M\to+\infty}|I_1|=0.
    \end{equation}
    Because of $\phi_i(v)\in H^1((-\infty,V_F])$, $\partial_v\phi_i\in L^2((-\infty,V_F])$. Then from \eqref{eq:c13} and Holder's inequality, we obtain
    \begin{equation}
        \label{eq:c18}
        \begin{aligned}
            |I_2|&=\left|\int_{-\infty}^{V_F}[(h-\Tilde{h})p]\partial_v\phi_i\mathrm{d}v\right|
            \le \left\| h-\Tilde{h} \right\|_{L^{\infty}(V_R,V_F]}\int_{-\infty}^{V_F}p\partial_v\phi_i\mathrm{d}v
        \end{aligned}
    \end{equation}
    and 
    \begin{equation}
        \label{eq:I2_to_0}
        \lim_{M\to+\infty}|I_2|=0.
    \end{equation}
    From the definition of weak solution, we know that $v\partial_v\phi_i\in H^{1}((-\infty, V_F])$. Therefore, applying the Holder's inequality, we get that
    \begin{equation}
    \label{eq:c19}
        \begin{aligned}
           |I_3|&= \left|\int_{-\infty}^{V_F}\Tilde{h}(p-\Tilde{p}_M)\partial_v\phi_i\mathrm{d}v\right|
           =\left|\int_{-\infty}^{V_F}\left(-v\partial_v\phi_i+b\Tilde{N}(t)\partial_v\phi_i\right)(p-\Tilde{p}_M)\mathrm{d}v\right|\\
            &\le \left(\int_{-\infty}^{V_F}\left(-v\partial_v\phi_i+b\Tilde{N}(t)\partial_v\phi_i\right)^2\mathrm{d}v\right)^{\frac{1}{2}} \left(\int_{-\infty}^{V_F}(p-\Tilde{p}_M)^2\mathrm{d}v\right)^{\frac{1}{2}},
        \end{aligned}
    \end{equation}
    which can be written as
    \begin{equation}
    \label{eq:c20}
        \begin{aligned}
        |I_3|
        &\le \left(\int_{-\infty}^{V_F}\left(-v\partial_v\phi_i+b\Tilde{N}(t)\partial_v\phi_i\right)^2\mathrm{d}v\right)^{\frac{1}{2}} \left\|p-\Tilde{p}_M \right\|_{L^2(-\infty,V_F]}.
        \end{aligned}
    \end{equation}
    Thus, we have
    \begin{equation}
        \label{eq:I3_to_0}
        \lim_{M\to+\infty}|I_3|=0.
    \end{equation}
    Similarly, we can obtain the following conclusion
    \begin{equation}
        \label{eq:c21}
        \begin{aligned}
          |I_4|&=  \left|\int_{-\infty}^{V_F}\Tilde{a}(\partial_v\Tilde{p}_M-\partial_vp)\partial_v\phi_i\mathrm{d}v\right|
            = \left|\int_{-\infty}^{V_F}(a_0+a_1\Tilde{N}(t))\partial_v(\Tilde{p}_M-p)\partial_v\phi_i\mathrm{d}v\right|\\
            & \le \left(\int_{-\infty}^{V_F}\left((a_0+a_1\Tilde{N}(t))\partial_v\phi_i\right)^2\mathrm{d}v\right)^{\frac{1}{2}} \left\|\partial_v(\Tilde{p}_M-p) \right\|_{L^2(-\infty,V_F]},
             \\
           |I_5|&= \left|\int_{-\infty}^{V_F}(\Tilde{a}-a)\partial_vp\partial_v\phi_i\mathrm{d}v\right| 
           =(\Tilde{a}-a)\int_{-\infty}^{V_F}\partial_vp\partial_v\phi_i\mathrm{d}v
            \le \left\| \Tilde{a}-a \right\|_{L^{\infty}(V_R,V_F]}\int_{-\infty}^{V_F}\partial_vp\partial_v\phi_i\mathrm{d}v.
        \end{aligned}
    \end{equation}
    Therefore we get
    \begin{equation}
        \label{eq:I45_to_0}
        \lim_{M\to+\infty}|I_4|=0, \quad \lim_{M\to+\infty}|I_5|=0.
    \end{equation}
    In conclusion, we have
    \begin{equation}
        \label{eq:c23}
        \begin{aligned}
            \lim_{M\to+\infty}I_k=0,\quad k=1,2,\cdots,7. 
        \end{aligned}
    \end{equation}
    Accordingly, substituting \eqref{eq:c23} to \eqref{eq:c7}, we obtain
    \begin{equation}
    \label{eq:c24}
        \begin{split}
            \lim_{M\to+\infty}\mathcal{L}(\Tilde{p}_{M},\phi_i)=\lim_{M\to+\infty}\sum_{k=1}^{7}I_k=0. 
        \end{split}
    \end{equation}
    Namely, the numerical scheme \eqref{eq:galerkin approximation} is consistent with the Fokker-Planck equation \eqref{eq:1}. The proof is completed.

\end{proof}

\section{Model extensions}\label{sec:two populations}
In the previous section, we introduced a fully discrete numerical scheme for the Fokker-Planck equation \eqref{eq:1}. Thanks to the flexibility of the LLSGM, this method is readily applicable to more complex models. In this section, we consider a more realistic NNLIF model (see e.g. \cite{caceres2018Towards}), featuring two populations, synaptic delays, and refractory periods, and its numerical simulation is more challenging. To verify the flexibility of the proposed numerical method, we explore the extended models from a numerical perspective.

\subsection{Model introduction}
\label{sec:two-pop-model}
In the one-population model, we examine the simplest scenario in which all neurons are considered either excitatory or inhibitory. However, in large-scale neuron networks, the dynamical behavior becomes more intricate due to the interplay between two distinct populations of neurons, excitatory and inhibitory. To accurately capture the dynamics of a neuron network, the model \eqref{eq:1} has been extended to incorporate both excitatory populations (E-P) and inhibitory populations (I-P). The extended system now comprises two probability density functions, $p_E(v,t)$ for the excitatory population and $p_I(v,t)$ for the inhibitory population, each governed by an analogous Fokker-Planck equation \cite{caceres2016blow} with \eqref{eq:1}, as follows
\begin{equation}
    \label{eq:two_pop_with_N}
    \partial _t p_{\alpha} +\partial _v \big[h^{\alpha}\big(v, N_E\left(t\right), N_I\left(t\right)\big) p_{\alpha}(v,t)\big]-a_{\alpha}\big(N_E\left(t\right), N_I\left(t\right)\big)\partial _{vv} p_{\alpha}(v,t) = N_{\alpha}(t)\delta(v-V_R),
\end{equation}
where 
\begin{equation}
    \label{eq:two_pop_N}
    N_\alpha(t)=-a_\alpha\left(N_E(t), N_I(t)\right) \partial_v p_\alpha\left(V_F, t\right), \quad \alpha, \beta =E, I
\end{equation}
is the mean firing rate of the population $\alpha$. The drift and diffusion coefficients are
\begin{equation}
\label{eq:two_pop_para}
\begin{aligned}
h^\alpha\left( v, N_E, N_I \right) & =-v+b_E^\alpha N_E-b_I^\alpha N_I+\left(b_E^\alpha-b_E^E\right) \nu_{E, \text {ext}}, \\
a_\alpha\left(N_E, N_I\right) & =\mathrm{d}_E^\alpha \nu_{E, \text {ext}}+\mathrm{d}_E^\alpha N_E+\mathrm{d}_I^\alpha N_I,
\end{aligned}
\end{equation}
where $\nu_{E, \text {ext}} \ge 0$ describes the external inputs originating from excitatory neurons.
The synaptic strength between the two populations is governed by the connectivity parameters $b^{\alpha}_{\beta}\ge0$ and $d^{\alpha}_{\beta}\ge0$, which correspond to the parameters $b$ and $a_1$ in \eqref{eq:1}, respectively. Specifically, these parameters describe the strength of synaptic connections from neurons in population $\beta$ to neurons in population $\alpha$. In contrast, the connectivity parameter $b$ in the one-population model primarily represents the interactions between neurons within the same population, without the distinction between excitatory and inhibitory neurons.

In addition to the interactions between the two populations, synaptic delays, which describe the transmission delay of a spike, are introduced to enhance the realism of the model. When a neuron in population $\beta$ fires, its effect on neurons in population $\alpha$ is delayed by a constant time interval $D^{\alpha}_{\beta}$. The Fokker-Planck equations with the synaptic delays are expressed as
\begin{equation}
    \label{eq:two_pop_with_DN}
    \partial _t p_{\alpha} +\partial _v \big[h^{\alpha}\big(v, N_E\left(t-D_E^{\alpha}\right), N_I\left(t-D_I^{\alpha}\right)\big) p_{\alpha}\big]-a_{\alpha}\big(N_E\left(t-D_E^{\alpha}\right), N_I\left(t-D_I^{\alpha}\right)\big)\partial _{vv} p_{\alpha} = N_{\alpha}(t)\delta(v-V_R).
\end{equation}

After firing, neurons enter a refractory period, during which they are temporarily unresponsive to inputs. Over time, neurons gradually recover from their refractory state, becoming responsive again to inputs. The mean recovery rate from the refractory period is denoted by $M_{\alpha}(t)$. This mechanism is modeled by the following differential equations that describe the probability of neurons being in the refractory state $R_{\alpha}$
\begin{equation}
    \label{eq:refractory_R}
    \begin{aligned}
        \displaystyle &\frac{\mathrm{d}R_{\alpha}(t)}{\mathrm{d}t}=N_{\alpha}(t)-M_{\alpha}(t).\\
    \end{aligned}
\end{equation} 
There are multiple choices for $M_{\alpha}(t)$ \cite{brunel2000dynamics,caceres2014beyond}. In this section, we consider
\begin{equation}
    \label{eq:M_equation}
    M_{\alpha}(t)=\frac{R_{\alpha}(t)}{\tau_{\alpha}},
\end{equation}
where $\tau_{\alpha}$ measures the mean duration of the refractory period. Therefore, the corresponding Fokker-Planck equation, which includes the recovery mechanism, is given by
\begin{equation}
    \label{eq:two_pop_with_M}
    \partial _t p_{\alpha} +\partial _v \big[h^{\alpha}\big(v, N_E\left(t-D_E^{\alpha}\right), N_I\left(t-D_I^{\alpha}\right)\big) p_{\alpha}\big]-a_{\alpha}\big(N_E\left(t-D_E^{\alpha}\right), N_I\left(t-D_I^{\alpha}\right)\big)\partial _{vv} p_{\alpha} = M_{\alpha}(t)\delta(v-V_R).
\end{equation}
The source term on the right-hand side represents the fact that the membrane potential is immediately reset to $V_R$ after the neuron fires, and it stays in the refractory period for a duration.

Compared to the previous NNLIF model \eqref{eq:1}, this mesoscopic model includes two Fokker-Planck equations that govern the evolution of the probability density functions \(p_\alpha(v,t)\), along with two ordinary differential equations (ODEs) that describe the evolution of the refractory states \(R_\alpha\) of the neurons. By setting the same equation on $(-\infty, V_R)\cup(V_R, V_F]$ and incorporating the jump conditions, Dirichlet boundary conditions, and initial conditions, we have this nonlinear system \cite{Cáceres2018Analysis}
\begin{equation}
\label{eq:two population}
\begin{cases}
    \displaystyle \partial _t p_{\alpha} +\partial _v \big[h^{\alpha}\big(v, N_E\left(t-D_E^{\alpha}\right), N_I\left(t-D_I^{\alpha}\right)\big) p_{\alpha}\big]-a_{\alpha}\big(N_E\left(t-D_E^{\alpha}\right), N_I\left(t-D_I^{\alpha}\right)\big)\partial _{vv} p_{\alpha} =0,\\
\begin{aligned}
    \displaystyle &\frac{\mathrm{d}R_{\alpha}(t)}{\mathrm{d}t}=N_{\alpha}(t)-M_{\alpha}(t),\\ &N_\alpha(t)=-a_\alpha\big(N_E(t-D_E^{\alpha}), N_I(t-D_I^{\alpha})\big) \partial_v p_\alpha\left(V_F, t\right),
\end{aligned}\\
    \begin{aligned}
	\displaystyle p_\alpha(v,0)=p^0_\alpha(v)\ge0,\, R_\alpha(0)=R^0_\alpha\ge0, 
    \end{aligned}\\
    \displaystyle p_\alpha(-\infty,t)=p_\alpha(V_F,t)=0,\,p_{\alpha}(V_R^-,t)=p_{\alpha}(V_R^+,t),\, \partial_vp_{\alpha}(V_R^-,t)-\partial_vp_{\alpha}(V_R^+,t)=\frac{M_{\alpha}(t)}{a_{\alpha}(N_E(t),N_I(t))}.
\end{cases}
\end{equation}

Since the number of neurons remains unchanged, the total mass should be conserved, as shown below
\begin{equation}
\label{eq:two_mass_conservation}
    \int_{-\infty}^{V_F} p_\alpha(v, t) \mathrm{d} v+R_\alpha(t)=\int_{-\infty}^{V_F} p_\alpha^0(v) \mathrm{d} v+R_\alpha^0=1, \quad \forall t \geq 0, \quad \alpha=E, I .
\end{equation}
Despite previous theoretical and numerical studies on the model by \cite{caceres2018Towards, Cáceres2018Analysis, sharma2020discontinuous}, several properties remain unexplored. This study further investigates the periodic solution phenomena in this model.

\subsection{Numerical scheme}
\label{sec:two-pop-scheme}
We now elaborate the numerical scheme for $\eqref{eq:two population}$, employing the spectral method to discretize space and a semi-implicit format for time discretization of the Fokker-Planck equations. Concurrently, the ODEs are solved using the forward Euler method.

Thanks to the derivation in Sec. \ref{sec:weak_form}, the variational formulation of equation \eqref{eq:two population} can be naturally obtained as follows
\begin{equation}
\label{eq:two_weakform}
\begin{cases}
    \text{Given $p^0_\alpha(v) \in W$, find $p_{\alpha}(v,t)\in W, \alpha= E, I$, such that}\\
    \begin{aligned}
        \int_{-\infty}^{V_F} \Big(\partial _t p_{\alpha}(v,t)\phi(v) &-h^{\alpha}p_{\alpha}(v,t)\partial _v \phi(v) +a_{\alpha}\partial _v p_{\alpha}(v,t)\partial _v\phi(v) \Big)\mathrm{d}v \\
        &-a_{\alpha}\partial _v p_{\alpha}(V_F,t)\phi(V_F)-M_{\alpha}(t)\phi(V_R)=0,\quad \forall \phi \in W,
    \end{aligned}\\
    \displaystyle \frac{\mathrm{d}R_{\alpha}(t)}{\mathrm{d}t}=N_{\alpha}(t)-M_{\alpha}(t),\\
    p_\alpha(v,0)=p^0_\alpha(v),\, R_\alpha(0)=R^0_\alpha.
\end{cases}
\end{equation}
Here $W$ is the trial function space. Similar to the fully discrete scheme in Sec. \ref{sec:Fully_dis}, we first truncate the trial function space $W$ to obtain $W_M$, then the numerical solution $p_{\alpha,M}$ is expanded as
\begin{equation}
    p_{\alpha,M}(v,t)=\sum_{k=1}^{2M+1}u_{\alpha,k}(t)\psi_k(v), \quad \forall \psi_k \in W_M.
\end{equation}
The initial condition for the expansion coefficients $\{u_{\alpha,k}(0)\}_{k=1}^{2M+1}$ can be obtained by the least square approximation
\begin{equation}
    \int_{-\infty}^{V_F} \sum_{k=1}^{2M+1} u_{\alpha,k}(0)\psi_k(v)\psi_j(v) \mathrm{d}v = \int_{-\infty}^{V_F} p^0_{\alpha}(v) \psi_j(v)  \mathrm{d}v ,\quad j=1,2,\cdots,2M+1.
\end{equation}
Similarly, we choose the semi-implicit scheme in the time direction. Namely, the nonlinear part is treated explicitly and other parts are treated implicitly. 
Notice that, the drift coefficients $h_{\alpha}$ and diffusion coefficients $a_{\alpha}$ in the model involve synaptic delay terms related to historical information. So we define the delayed time index
\begin{equation}
    n_{\beta}^{\alpha} = \max\left(0,n - \frac{D_{\beta}^{\alpha}}{\Delta t}\right).
\end{equation}
Consequently, one should adjust the parameters to ensure that $n_{\beta}^{\alpha}$ is an integer in numerical experiments. After time discretization, we obtain the fully discrete scheme as follows, for $\alpha= E, I$,
 \begin{equation}
 \label{eq:two_fully_dis}
     \begin{cases}
        \vspace{5pt}
        \displaystyle \left(\frac{1}{\Delta t}H+A-V_{\alpha}(N_E^{n_E^\alpha},N_I^{n_I^\alpha})B+a_{\alpha}(N_E^{n_E^\alpha},N_I^{n_I^\alpha})C+a_{\alpha}(N_E^{n_E^\alpha},N_I^{n_I^\alpha})G\right)\boldsymbol{u}^{n+1}_{\alpha}=\frac{1}{\Delta t}H\boldsymbol{u}^{n}_{\alpha}+M^n_{\alpha}F,\\
        R^{n+1}_\alpha=R^{n}_\alpha+\Delta t (N^{n}_\alpha-M^{n}_\alpha),
     \end{cases}
 \end{equation}
 where 
 \begin{equation}
 \begin{aligned}
     &\boldsymbol{u}^{n}_{\alpha}=\big( u_{\alpha,1}(t^n),u_{\alpha,2}(t^n),\cdots,u_{\alpha,2M+1}(t^n) \big)^T,\\
     &R^n_{\alpha} = R_{\alpha}(t^n),\\
     &M_{\alpha}^n=\frac{R_{\alpha}^n}{\tau_{\alpha}},\\
     &G_{jk}=\partial_v\psi_k(V_F)\psi_j(V_F),\quad j,k=1,2,\cdots,2M+1,\\
     &F=(\psi_1(V_R),\psi_2(V_R),\cdots,\psi_{2M+1}(V_R))^T.
 \end{aligned}
 \end{equation}
 and the matrix $H,A,B,C$ are defined in \eqref{eq:43.1}. The mean firing rate can be obtained by solving the following equations
 \begin{equation}
 \begin{aligned}
     &N^{n}_{E}=-a_{E}(N_E^{n_E^{\alpha}},N_I^{n_I^{\alpha}})\sum_{k=1}^{2M+1}u^{n}_{E,k}\partial_v\psi_k(V_F),\\
     &N^{n}_{I}=-a_{I}(N_E^{n_E^{\alpha}},N_I^{n_I^{\alpha}})\sum_{k=1}^{2M+1}u^{n}_{I,k}\partial_v\psi_k(V_F).\\
 \end{aligned}
\end{equation}
The synaptic delay terms in \eqref{eq:two_fully_dis} is mainly reflected in $V_\alpha,a_\alpha$ as
\begin{equation}
\begin{aligned}
    &V_{\alpha}(N_E^{n_E^\alpha},N_I^{n_I^\alpha})=b^\alpha_EN_E^{n_E^\alpha}-b^\alpha_IN_I^{n_I^\alpha}+(b^\alpha_E-b^E_E)\nu_{E,\text{ext}},\\
    &a_\alpha(N_E^{n_E^\alpha},N_I^{n_I^\alpha}) = \mathrm{d}_E^\alpha \nu_{E, \text {ext}}+\mathrm{d}_E^\alpha N_E^{n_E^\alpha}+\mathrm{d}_I^\alpha N_I^{n_I^\alpha}.
\end{aligned}
\end{equation}

\section{Numerical test} 
\label{sec:numerical_test}
In this section, several numerical tests are studied to validate LLSGM. First, the convergence rates of the numerical schemes proposed in \eqref{eq:44} for the one-population model and \eqref{eq:two_fully_dis} for the two-population model are studied, followed by a stability study in Sec. \ref{sec:accuracy_stability}. Then, the numerical examples for the efficiency of LLSGM are tested, including the comparison with the finite difference method \cite{he2022structure} and the spectral method \cite{zhang2024spectral} in Sec. \ref{sec:efficiency}. Moreover, the blow-up phenomena for both the one-population and two-population models are also investigated in Sec. \ref{sec:blow_up}. Finally, the numerical experiments are extended to the two-population model with synaptic delays and refractory states in Sec. \ref{sec:dely_refactory}, where transitions between periodic oscillations, steady states, and blow-up events are explored. 
		
In all numerical tests, $V_F$ and $V_R$ are fixed as $V_F = 2$ and $V_R = 1$, and the initial distribution is chosen as the Gauss function
\begin{equation}
    \label{eq:initial}
    p_G(v)=\frac{1}{\sqrt{2\pi}\sigma_0 M_0}\mathrm{e}^{-\frac{(v-v_0)^2}{2\sigma_0^2}},
\end{equation} 
where $v_0$ and $\sigma_0^2$ are the mean and variance respectively. Here, $M_0$ is a normalization factor, which makes $p_G(v)$ satisfy 
    \begin{equation}
    \label{eq:normal}
    \int_{-\infty}^{V_F} p_G(v) \mathrm{d}v=1.
\end{equation}
In the numerical tests, $v_0, \sigma_0$, and $M_0$ are set problem-dependent.

\subsection{Study of the order of accuracy and the stability test}
\label{sec:accuracy_stability}
In this section, the order of accuracy for LLSGM including the temporal and spatial convergence of the one-population model \eqref{eq:1} and \eqref{eq:1.1} is first tested, followed by the convergence order of LLSGM in both temporal and spatial discretizations for the two-population model. Finally, the stability of LLSGM for the one-population model is examined by varying the expansion number $M$ and the time-step length $\Delta t$.

\subsubsection{Order of accuracy for one-population model}
\label{sec:accuracy_one_pop}
In this section, the convergence order for LLSGM in temporal and spatial discretization is studied. The initial distribution in \eqref{eq:initial} is set as $v_0=-1$ and $\sigma_0^2=0.5$ with the parameters in \eqref{eq:1} chosen as $a_0=1, a_1=0.1$ and $b=0$.

The convergence of the explicit-implicit scheme \eqref{eq:44} in temporal discretization is tested by calculating the $L^2$ and $L^{\infty}$ errors between the numerical solution obtained with different time-step lengths and the reference solution. Here, the reference solution is obtained by the finite-difference method  (FDM) proposed in \cite{hu2021structure} with the mesh fine enough. In the simulation, the expansion number $M$ is set as $M = 16$, and the final time is $t = 0.2$. Tab. \ref{tab:ex1_time_first} shows the error calculated by \eqref{eq:ex1_error_1}
\begin{equation}
    \label{eq:ex1_error_1}
    O_{\Delta t,L^l}=\log_2\frac{\left\|p_{2\Delta t}-p_r\right\|_{l}}{\left\|p_{\Delta t}-p_r\right\|_l},
\end{equation}
where $p_r$ denotes the reference solution and $p_{\Delta t}$ is the numerical solution with the time-step length $\Delta t$. For the numerical solution, the time-step lengths are set as $\Delta t = 0.04, 0.02, 0.01$ and $0.005$. Tab. \ref{tab:ex1_time_first} indicates the first-order convergence of \eqref{eq:44} in the temporal direction.

\begin{table}[!hptb]
		\centering 
		\def\arraystretch{1.5}
		\scalebox{0.8}{
        		 \small
			\begin{tabular}{c|c|c|c|c}
				$\Delta t$ & $\left \|p_{\Delta t}- p_r  \right \|_2$ & $O_{\Delta t,L^2}$ & $\left \|p_{\Delta t}-p_r  \right \|_{\infty}$ &$O_{\Delta t,L^{\infty}}$ \\
				\hline
				0.04		& 4.58e-03	& ——	& 3.89e-03	& ——	\\
				0.02		& 2.36e-03	& 0.95	& 2.02e-03	& 0.95	\\
				0.01		& 1.20e-03	& 0.97	& 1.04e-03	& 0.97	\\
				0.005		& 6.09e-04	& 0.98	& 5.31e-04	& 0.96	\\
			\end{tabular}
		}
		\caption{(Order of accuracy in Sec. \ref{sec:accuracy_one_pop}) The first order convergence of the explicit-implicit scheme \eqref{eq:44} in the temporal discretization for the one-population model.}
		\label{tab:ex1_time_first}
\end{table}
	
Then the convergence of LLSGM \eqref{eq:44} in the spatial space is studied, where the $L^2$ error between the numerical solution obtained with different expansion numbers $M$ and the reference solution is calculated. Here, the same reference solution obtained by FDM is utilized. For the numerical solution, the time-step length is fixed as $\Delta t = 0.001$, and the expansion number varies from $M = 3$ to $12$. Here, we want to emphasize that the number of basis functions is $2M + 1$. The corresponding error is calculated as 
	\begin{equation}
		\label{eq:ex1_error_spatial}
		O_{M, L^2} = \ln  \left\| p_M-p_r \right\|_2,
	\end{equation}
where $p_M$ is the numerical solution with the different expansion number $M$. Fig. \ref{fig:ex1_spatial_one} shows the error obtained by \eqref{eq:ex1_error_spatial} with different $M$, where the numerical results obtained by odd and even $M$ are plotted respectively. In Fig. \ref{fig:ex1_spatial_one}, we observe a linear relation between the error \eqref{eq:ex1_error_spatial} and $M$, which indicates the spectral convergence of LLSGM. 
	
	\begin{figure}[!hptb]
		\centering
		\subfigure[odd $M$]{
			\includegraphics[width=0.45\textwidth]{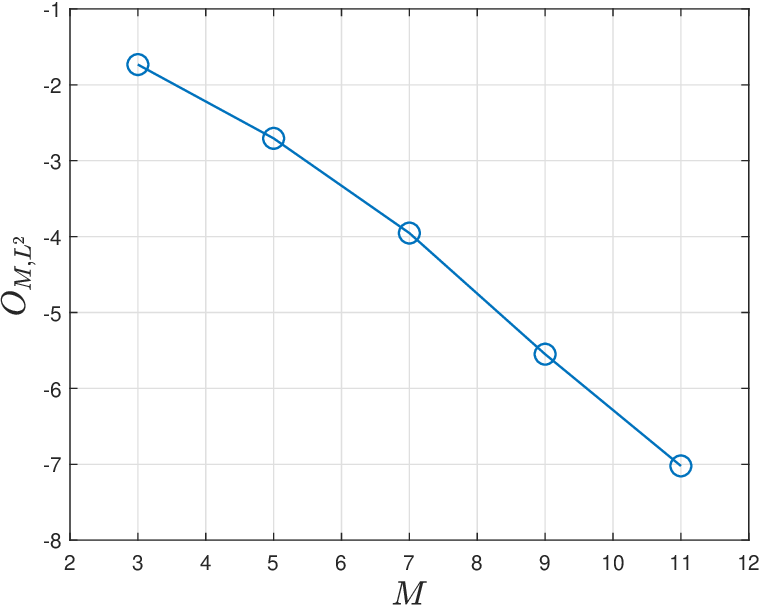}   
		}\hfill
		\subfigure[even $M$]{
			\includegraphics[width = 0.46\textwidth]{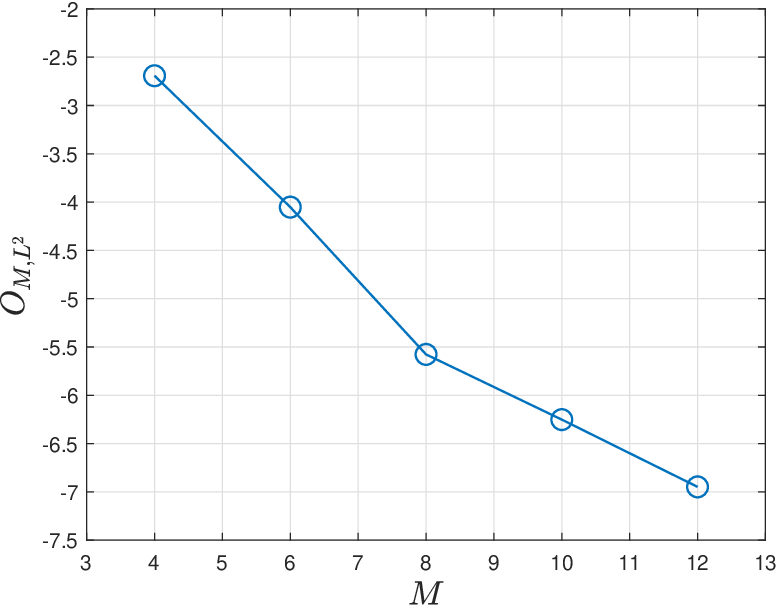}
		}
		\caption{(Order of accuracy in Sec. \ref{sec:accuracy_one_pop}) The spectral convergence of the explicit-implicit scheme \eqref{eq:44} in the spatial discretization for the one-population model. (a) $M$ is odd. (b) $M$ is even.}
		\label{fig:ex1_spatial_one}
	\end{figure}

\subsubsection{Order of accuracy for two-population model}
\label{sec:accuracy_two_pop}
   \begin{table}[!hptb]
		\centering
		\renewcommand{\arraystretch}{1.5}
		\scalebox{0.8}{
        		 \small
			\begin{tabular}{c|>{\centering\arraybackslash}p{3cm} >{\centering\arraybackslash}p{2cm}}
				\multirow{1}{*}{\text{diffusion coefficient}}          & $a$                  & $1$ \\ \hline
				\multirow{1}{*}{\text{external input}}                 & $\nu_{E,\text{ext}}$ & $0$ \\ \hline
				\multirow{1}{*}{\text{refractory periods}}              & $\tau_E,\tau_I$      & $0$ \\ \hline
				\multirow{2}{*}{\text{initial distribution}}   & $[v_0^{E}, (\sigma_0^{E})^2]$              &  $[-1, 0.5]$ \\
				& $[v_0^{I}, (\sigma_0^{I})^2]$              & $[0, 0.25]$   \\ \hline
				\multirow{1}{*}{\text{synaptic delays}}        & $D_E^E, D_E^I, D_I^E, D_I^I$               & $0$ \\ \hline
				\multirow{2}{*}{\text{connectivity parameters}}     & $[b_E^E,b_E^I]$                & $[0.5,0.5]$ \\  
				& $[b_I^E,b_I^I]$                & $[0.75,0.25]$ 
		\end{tabular}}
		\caption{(Order of accuracy in Sec. \ref{sec:accuracy_two_pop}) Computational parameters in \eqref{eq:two population} for the two-population model.}
		\label{tab:ex2_convergence_para}
	\end{table}

In this section, the convergence order of LLSGM applied to the two-population model is studied, focusing on both temporal and spatial discretization. For the two-population model, the same Gaussian function \eqref{eq:initial} is utilized for the initial distribution of both excitatory and inhibitory populations with the mean and variance denoted by $v_0^{\alpha}$ and $(\sigma_0^{\alpha})^2, \alpha=E, I$. Here, we set $M_{\alpha}=N_{\alpha}$ and the other parameters in \eqref{eq:two population} are shown in Tab. \ref{tab:ex2_convergence_para}.
	\begin{table}[!hptb]
	\centering 
	\def\arraystretch{1.5}
	\scalebox{0.8}{
    		 \small
		\begin{tabular}{c|c|c|c|c}
			Excitatory population  & $\left \|p^E_{\Delta t}- p^E_r  \right \|_2$ & $O^E_{\Delta t,L^2}$ & $\left \|p^E_{\Delta t}- p^E_r  \right \|_{\infty}$ &$O^E_{\Delta t,L^{\infty}}$ \\
			\hline
			$\Delta t=0.04$	    &4.16e-03 &  ——	&3.52e-03 & ——    \\
			$\Delta t=0.02$	    &2.15e-03 &0.95 &1.83e-03	&0.94 \\
			$\Delta t=0.01$	    &1.09e-03 &0.97 &9.46e-04	&0.95 \\
			$\Delta t=0.005$	&5.54e-04 &0.98 &4.81e-04	&0.98 \\
			\hline
			Inhibitory population  & $\left \|p^I_{\Delta t}- p^I_r  \right \|_2$ & $O^I_{\Delta t,L^2}$ & $\left \|p^I_{\Delta t}- p^I_r  \right \|_{\infty}$ &$O^I_{\Delta t,L^{\infty}}$ \\
			\hline
			$\Delta t=0.04$	    &1.02e-02 &  ——	&1.12e-02 & ——    \\
			$\Delta t=0.02$	    &5.28e-03 &0.96 &5.72e-03	&0.97 \\
			$\Delta t=0.01$	    &2.68e-03 &0.98 &2.87e-03	&0.99 \\
			$\Delta t=0.005$	&1.35e-03 &0.99 &1.42e-03	&1.01 \\
		\end{tabular}}
	\caption{(Order of accuracy in Sec. \ref{sec:accuracy_two_pop}) The first-order convergence of the scheme \eqref{eq:two_fully_dis} in the temporal discretization for the two-population model.}
	\label{tab:ex2_time}
\end{table}

Similar to the one-population model, to assess the temporal convergence of the numerical scheme \eqref{eq:two_fully_dis}, the $L^2$ and $L^{\infty}$ errors between the numerical solutions obtained with different time-step lengths and the reference solution are calculated. Here, the reference solution is obtained by the finite-difference method \cite{hu2021structure} with a sufficiently fine mesh. In the simulation, the expansion number is fixed as $M = 16$. The error calculated by \eqref{eq:ex1_error_1} for both the excitatory and inhibitory populations at $t = 0.2$ is shown in Tab. \ref{tab:ex2_time}, where the time-step length utilized to obtain the numerical solution is $\Delta t = 0.04, 0.02, 0.01$ and $0.005$. It clearly demonstrates the first-order convergence of the scheme \eqref{eq:two_fully_dis} in the temporal direction.

\begin{figure}[!hptb]
    \centering
    \subfigure[odd $M$]{
    \includegraphics[width=0.45\textwidth]{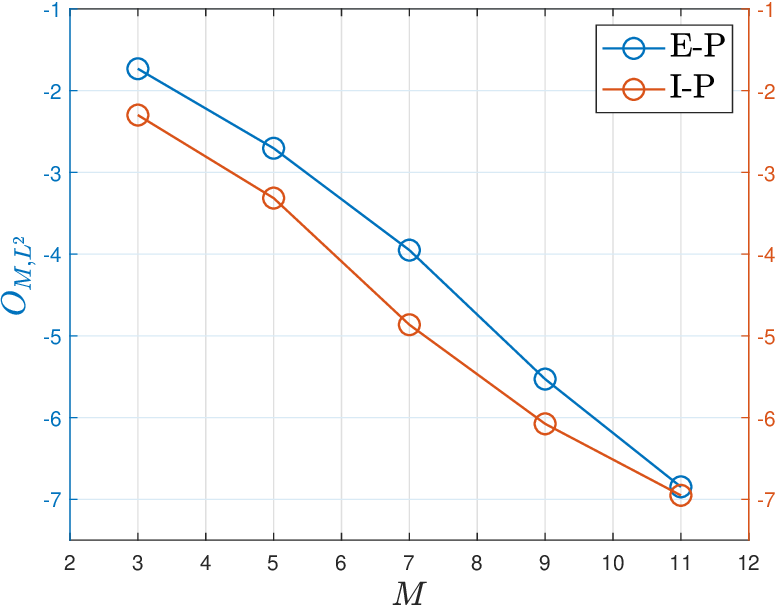}
		}
    \subfigure[even $M$]{
    \includegraphics[width=0.47\textwidth]{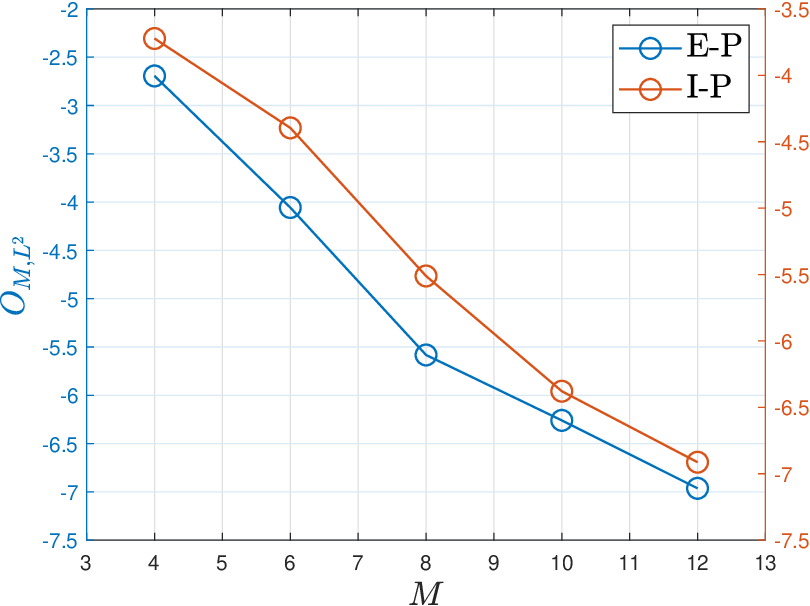}
		} 
    \caption{(Order of accuracy in Sec. \ref{sec:accuracy_two_pop}) The spectral convergence of the scheme \eqref{eq:two_fully_dis} in the spatial discretization for the two-population model. Here, the blue line is for the excitatory population (E-P) and the red line is for the inhibitory population (I-P). (a) $M$ is odd. (b) $M$ is even.} 
    \label{fig:ex2_spatial}
\end{figure}
	
Next, the spatial convergence of LLSGM \eqref{eq:two_fully_dis} is investigated by computing the $L^2$ error between numerical solutions obtained with different expansion numbers $M$ and the same reference solution. The $L^2$ error is calculated by \eqref{eq:ex1_error_spatial}. In this simulation, the time step length is fixed at $\Delta t = 0.001$, while the expansion number $M$ varies from $3$ to $12$. Fig. \ref{fig:ex2_spatial} displays the error obtained by \eqref{eq:ex1_error_spatial} with odd and even $M$ plotted separately, where a linear relationship between the error \eqref{eq:ex1_error_spatial} and $M$ is revealed, demonstrating the spectral convergence of LLSGM for the more complex two-population model \eqref{eq:two population}.

\subsubsection{Stability test for one-population model}
\label{sec:stability}
In this section, we assess the stability of LLSGM applied to the one-population model \eqref{eq:1}. The initial distribution \eqref{eq:initial} is set as $v_0=-1$ and $\sigma_0^2=0.5$, and the parameters in \eqref{eq:1} are $a=1, a_1=0.1$ and $b=0$. Here, the reference solution is obtained by the finite difference method. The $L^2$ error between the numerical solution and the reference solution at $t=0.2$ for different expansion numbers $M$ and time-step lengths $\Delta t$ is shown in Tab. \ref{tab:ex3_stability}. The $L^2$ error remains bounded and does not exhibit significant increases with changes in $M$ and $\Delta t$, demonstrating the numerical stability of LLSMG.

\begin{table}[!hptb]
	\centering 
	\def\arraystretch{1.5}
	\scalebox{0.8}{
		 \small
		\begin{tabular}{c|c|c|c|c|c|c}
			\diagbox{$M$}{$\Delta t$} & 0.1 & 0.05 & 0.025 & 0.0125 &0.0063 &0.0032 \\
			\hline
			3 & 1.63e-01 & 1.69e-01 & 1.73e-01 & 1.75e-01 & 1.76e-01 & 1.76e-01 \\
			4 & 6.96e-02 & 6.86e-02 & 6.81e-02 & 6.79e-02 & 6.77e-02 & 6.77e-02 \\
			5 & 6.27e-02 & 6.44e-02 & 6.55e-02 & 6.61e-02 & 6.65e-02 & 6.66e-02 \\
			6 & 2.04e-02 & 1.85e-02 & 1.78e-02 & 1.75e-02 & 1.74e-02 & 1.74e-02 \\
			7 & 2.01e-02 & 1.91e-02 & 1.90e-02 & 1.90e-02 & 1.91e-02 & 1.92e-02 \\
			8 & 1.11e-02 & 6.80e-03 & 4.81e-03 & 4.06e-03 & 3.81e-03 & 3.74e-03 \\
			9 & 1.09e-02 & 6.58e-03 & 4.69e-03 & 4.07e-03 & 3.92e-03 & 3.89e-03 \\
		   10 & 1.06e-02 & 5.91e-03 & 3.45e-03 & 2.36e-03 & 1.97e-03 & 1.85e-03 \\
		   11 & 1.05e-02 & 5.68e-03 & 3.03e-03 & 1.71e-03 & 1.15e-03 & 9.56e-04 \\
		   12 & 1.05e-02 & 5.68e-03 & 3.02e-03 & 1.67e-03 & 1.06e-03 & 8.38e-04 \\
		   13 & 1.05e-02 & 5.64e-03 & 2.94e-03 & 1.51e-03 & 7.76e-04 & 4.17e-04 \\
		\end{tabular}
	}
	\caption{(Stability test in Sec. \ref{sec:stability}) The $L^2$ error of LLSGM with different $M$ and $\Delta t$.}
    \label{tab:ex3_stability}
\end{table}

\subsection{Efficiency comparison}
\label{sec:efficiency}

In this section, the efficiency of LLSGM is validated by comparing it with the finite difference method (FDM) proposed in \cite{hu2021structure} and the Legendre-Galerkin method (LGM) in \cite{zhang2024spectral}. All simulations are performed on the model Intel Core i5-1135G7 CPU@2.40 GHz.
	
In the simulation, we consider the initial distribution \eqref{eq:initial} with $v_0=0$ and $\sigma_0^2=0.25$. The time-step length is fixed at $\Delta t=10^{-7}$ so that the time discretization error is negligible compared to the error in the spatial space. The final time is $t=0.5$ and parameters in \eqref{eq:1} are set as $a=1$ and $b=0.5$. Here, the reference solution $p_r$ for both LLSGM and LGM is the numerical solution with $M=30$, respectively. For FDM, the reference solution $p_r$ is the numerical solution obtained with spatial size $h=\frac{1}{128}$.
	
The $L^2$ error and the computational time for LLSGM, LGM, and FDM are presented in Tab. \ref{tab:ex3_time_compare_llgsm_lgm_fdm}. For the same expansion number, the error and the computational time of LLSGM are slightly less than those required by LGM in \cite{zhang2024spectral}. Furthermore, when the expansion number is small, the computational time of LLSGM is significantly less than that of FDM when the similar order of the numerical error is obtained. When the expansion number is larger, the computational time of LLSGM is much less than that of FDM. Specially, when the numerical error reaches $e^{-5}$, the computational time of FDM is more than ten times of LLSGM.


\begin{table}[htbp]
		\centering
          \renewcommand{\arraystretch}{1.5}
		\scalebox{0.8}{
        \small
			\begin{tabular}{c|c|c|c|c|c|c|c|c}
				\multicolumn{3}{c|}{LLSGM} & \multicolumn{3}{c|}{LGM} & \multicolumn{3}{c}{FDM} \\ \hline\hline
				$M$   &$\left \|p_{M}-p_r  \right \|_2$     &CPU time (s) &$M$ &   $\left \|p_{M}-p_r  \right \|_2$   &CPU time (s) &$h$ &  $\left \|p_{h}-p_r  \right \|_2$   &CPU time (s) \\ \hline
				4	      &3.55e-02            & 30.98  &4	      &6.18e-02       & 38.85  &1/4	       &3.75e-03       &47.72 \\
				8  	    &6.72e-03            & 47.55 &8       &9.51e-03      & 63.97 &1/8    	   &1.12e-03       &93.72 \\
				12      &1.33e-04          & 62.75 &12      &4.04e-04   	   & 73.48 &1/16         &3.12e-04   	&219.30  \\
				16      &2.11e-05            & 83.09 &16      &7.23e-05      & 96.21 &1/32   	   &8.16e-05   	&536.48 \\
				20      &1.96e-06  	     & 106.14 &20  &2.26e-06	  	& 139.73 &1/64   	   &1.98e-05   	&4741.07 \\
			\end{tabular}
		}
		\caption{(Efficiency test in Sec. \ref{sec:efficiency}) The computational time of LLSGM, LGM and FDM for \eqref{eq:1}.}
		\label{tab:ex3_time_compare_llgsm_lgm_fdm}
\end{table}

Next, considering the more complex two-population model \eqref{eq:two population}, the computational time for periodic solutions obtained by LLSGM and FDM is compared. In the simulation, the final time is set as $t=0.5$ and the time-step length is $\Delta t=10^{-7}$, while other parameters are listed in Tab. \ref{tab:ex2_periodic_para}. The corresponding computational time is provided in Tab. \ref{tab:ex3_compare_llgsm_fdm}. Significantly less time is required by LLSGM compared to FDM, demonstrating its greater efficiency for a more complex scenario.

\begin{table}[!hptb]
\centering
\renewcommand{\arraystretch}{1.5}
\scalebox{0.8}{
		 \small
    \begin{tabular}{c|>{\centering\arraybackslash}p{3cm} >{\centering\arraybackslash}p{2cm}}
        \multirow{1}{*}{\text{diffusion coefficient}}          & $a$                  & $1$ \\ \hline
	\multirow{1}{*}{\text{external input}}                 & $\nu_{E,\text{ext}}$ & $20$ \\ \hline
	\multirow{1}{*}{\text{refractory periods}}              & $\tau_E,\tau_I$      & $0.025$ \\ \hline
	\multirow{2}{*}{\text{initial distribution}}   & $[v_0^{E}, (\sigma_0^{E})^2]$              &  $[-1, 0.5]$ \\
	                                                & $[v_0^{I}, (\sigma_0^{I})^2]$              & $[-1, 0.5]$   \\ \hline
	\multirow{1}{*}{\text{synaptic delays}}        & $D_E^E, D_E^I, D_I^E, D_I^I$                     & $0.1$ \\ \hline
	\multirow{2}{*}{\text{connectivity parameters}}     & $[b_E^E,b_E^I]$                & $[3.5,4]$ \\  
	& $[b_I^E,b_I^I]$                & $[0.75,3]$ \\
    \end{tabular}}
\caption{(Efficiency test in Sec. \ref{sec:efficiency}) Computational parameters in \eqref{eq:two population} for the two-population model with delays and refractory states.}
\label{tab:ex2_periodic_para}
\end{table}

\begin{table}[!hptb]
		\centering 
		\def\arraystretch{1.5}
		\scalebox{0.8}{
        		 \small
			\begin{tabular}{c|c|c|c|c}
				\multirow{4}{*}{LLSGM} & $M$ & $\left \|p^E_{M}-p^E_r \right \|_2$ & $\left \|p^I_{M}-p^I_r \right \|_2$ & CPU time (s)  \\ \cline{2-5}
				& 4   & 3.97e-01  & 2.32e-01    & 65.72\\
				& 8   & 2.12e-02  & 2.27e-03    & 91.12\\
				& 12  & 1.47e-03  & 3.06e-04    & 122.29\\
				& 16  & 1.41e-04  & 2.78e-05    & 163.61\\
				\hline\hline
				\multirow{5}{*}{FDM}   &$h$ & $\left \|p_{h}^E-p_r^E  \right \|_2$ &$\left \|p_{h}^I-p_r^I  \right \|_2$  &CPU time (s)  \\ \cline{2-5}
				& 1/4  & 2.78e-02  & 1.97e-01    & 137.34\\
				& 1/8  & 8.02e-03  & 7.74e-02    & 213.06\\
				& 1/16 & 2.19e-03  & 1.94e-02    & 509.48\\
				& 1/32 & 7.78e-04  & 4.54e-03    & 1259.75\\
				& 1/64 & 3.31e-04  & 9.04e-04    & 11375.21\\
		\end{tabular}}
		\caption{(Efficiency test in Sec. \ref{sec:efficiency}) The computation time of LLSGM and FDM for \eqref{eq:two population}.}
		\label{tab:ex3_compare_llgsm_fdm}
\end{table}

\subsection{Blow-up events}
\label{sec:blow_up}
In this section, the blow-up events are first studied with LLSGM for the one-population model, and the occurrence of blow-up phenomena in the two-population model is then observed. The blow-up events have been widely studied in the NNLIF model, where the solution may blow up in a finite time.

\subsubsection{One-population model}
\label{sec:blow_up_one}
In this section, the blow-up events are studied in detail with LLSGM for the one-population model. The study in \cite{caceres2011analysis, zhang2024spectral} shows that the blow-up events occur if the parameter $b$ in \eqref{eq:1} is sufficiently large. In the simulation, the parameters are set as $a_0=1$, $a_1=0$ and $b=3$, and the scheme \eqref{eq:44} is utilized with the expansion number $M = 16$ and the time step length fixed as $\Delta t = 0.001$. 
	
The evolution of the mean firing rate $N(t)$ is plotted in Fig. \ref{fig:blow_up1_N}, and the distribution of the density function $p(v,t)$ on $v$ at $t =2.95, 3.15$ and $3.35$ is shown in Fig. \ref{fig:blow_up1_p}. It can be observed that the density function becomes increasingly concentrated around $V_R$, with a sharp increase in response to a rapid increase in the firing rate $N(t)$.

\begin{figure}[!hptb]
    \centering
    \subfigure[$N(t)$]{
        \includegraphics[width=0.45\textwidth]{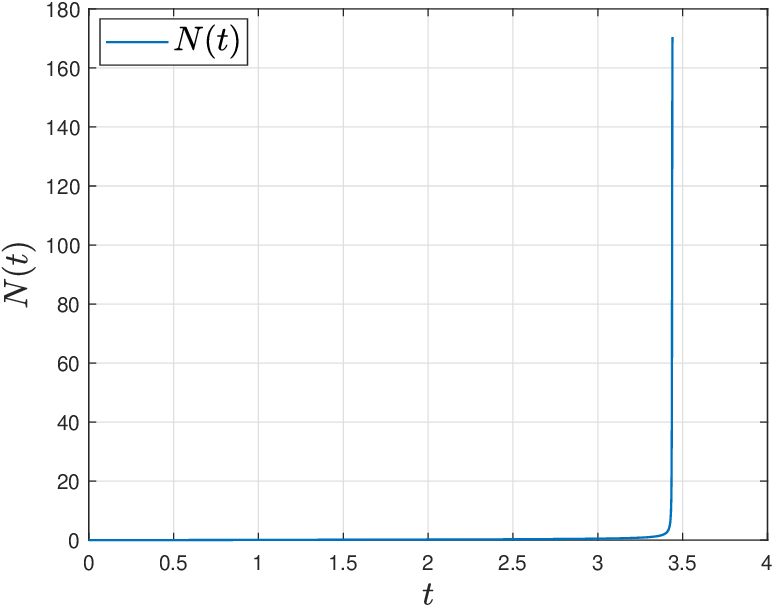}   
        \label{fig:blow_up1_N}
    }\hfill
    \subfigure[$p(v, t)$]{
        \includegraphics[width = 0.45\textwidth]{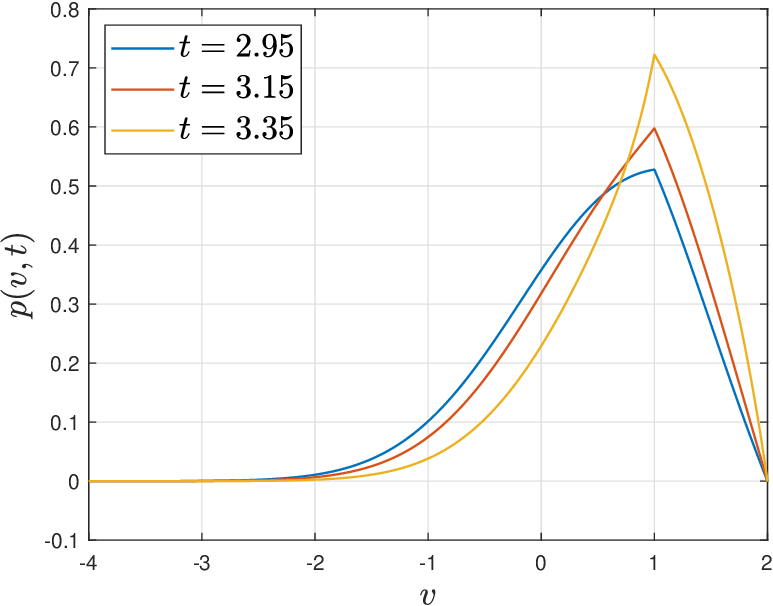}
        \label{fig:blow_up1_p}
    }
    \caption{(Blow-up events for the one-population model in Sec. \ref{sec:blow_up_one}) The blow-up phenomenon for the one-population model. (a) The evolution of the mean firing rate $N(t)$. (b) The density function $p(v,t)$ at $t = 2.95, 3.15$, and $3.35$.}
    \label{fig:blow_up1}
\end{figure}

\subsubsection{Two-population model}
\label{sec:blow_up_two}
In this section, we extend blow-up events by applying the LLSGM method to the excitatory-inhibitory population model \eqref{eq:two population}. As shown in \cite{caceres2016blow, Cáceres2018Analysis}, blow-up events can occur for the two-population model when the connectivity parameter $b_E^E$ becomes sufficiently large. In the simulation, we set $M_{\alpha}=N_{\alpha}$, and the scheme \eqref{eq:two_fully_dis} is adopted with the expansion number $M = 16$ and a fixed time-step length $\Delta t = 0.001$. The detailed parameters in \eqref{eq:two population} are listed in Tab. \ref{tab:ex2_blow_up_para}. 
	
\begin{table}[!hptb]
    \centering
    \renewcommand{\arraystretch}{1.5}
    \scalebox{0.8}{
    		 \small
    \begin{tabular}{c|>{\centering\arraybackslash}p{3cm} >{\centering\arraybackslash}p{2cm}}
		\multirow{1}{*}{\text{diffusion coefficient}}          & $a$                  & $1$ \\ \hline
		\multirow{1}{*}{\text{external input}}                 & $\nu_{E,\text{ext}}$ & $0$ \\ \hline
		\multirow{1}{*}{\text{refractory periods}}              & $\tau_E,\tau_I$      & $0$ \\ \hline
		\multirow{2}{*}{\text{initial distribution}}   & $[v_0^{E}, (\sigma_0^{E})^2]$              &  $[-1, 0.5]$ \\
		& $[v_0^{I}, (\sigma_0^{I})^2]$              & $[-1, 0.5]$   \\ \hline
		\multirow{1}{*}{\text{synaptic delays}}        & $D_E^E, D_E^I, D_I^E, D_I^I$                     & $0$ \\ \hline
		\multirow{2}{*}{\text{connectivity parameters}}     & $[b_E^E,b_E^I]$                & $[3,0.5]$ \\  
		& $[b_I^E,b_I^I]$                & $[0.75,0.25]$ \\
    \end{tabular}}
    \caption{(Blow-up events for the two-population model in Sec. \ref{sec:blow_up_two}) Computational parameters in \eqref{eq:two population} for the two-population model.}
    \label{tab:ex2_blow_up_para}
\end{table}

Fig. \ref{fig:blow_up2E_N} and Fig. \ref{fig:blow_up2I_N} illustrate the evolution of the mean firing rates $N_E(t)$ and $N_I(t)$, while Fig. \ref{fig:blow_up2E_p} and Fig. \ref{fig:blow_up2I_p} show the distribution of the density functions $p_E(v,t)$ and $p_I(v,t)$ at different time. As depicted in Fig. \ref{fig:blow_up2}, two populations nearly blow up simultaneously. The concentration of the density functions around $V_R$, as shown in Fig. \ref{fig:blow_up2E_p} and Fig. \ref{fig:blow_up2I_p}, provides a clear indication of the blow-up events. Compared to the inhibitory population, the density function of the excitatory population accumulates more distinctly at the reset voltage $V_R$, resulting in a sharper peak.

\begin{figure}[!hptb]
    \centering
    \subfigure[$N_E(t)$]{
        \includegraphics[width = 0.45\textwidth]{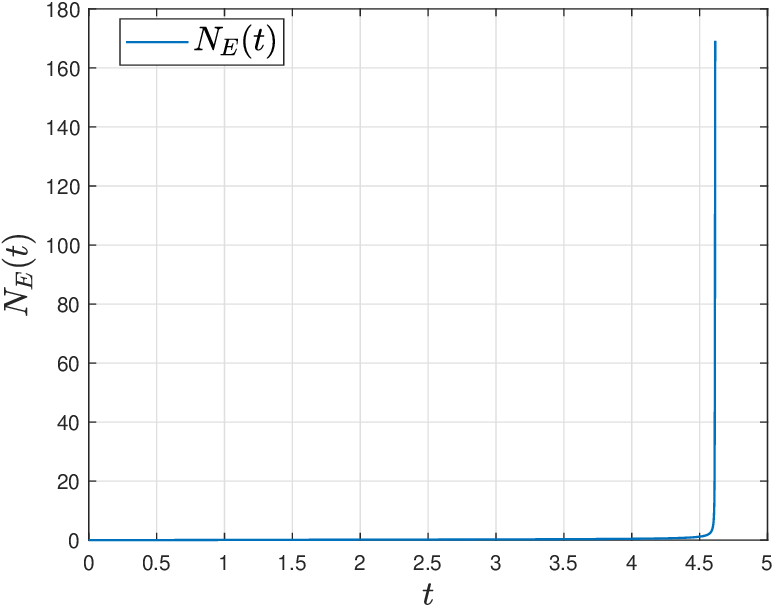}
        \label{fig:blow_up2E_N}
    }\hfill
    \subfigure[$p_E(v, t)$]{               
        \includegraphics[width = 0.45\textwidth]{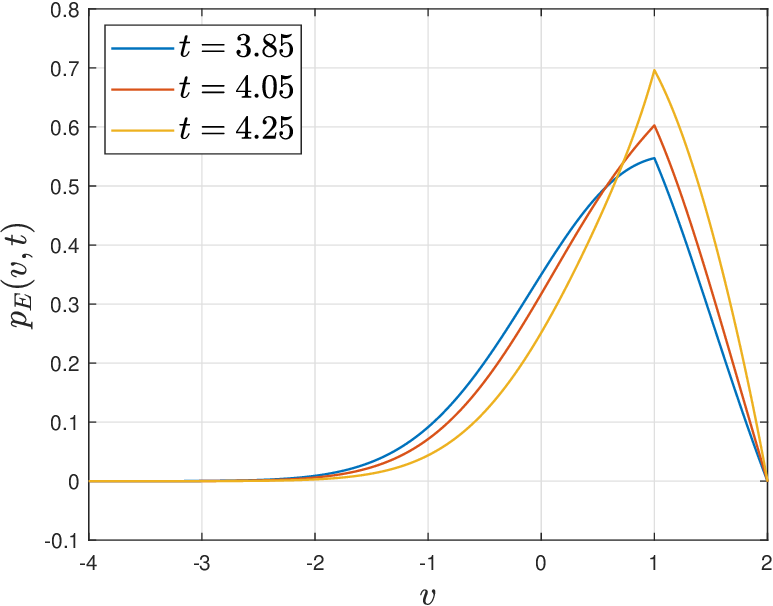}
        \label{fig:blow_up2E_p}
    }  \\
    \subfigure[$N_I(t)$]{    
        \includegraphics[width = 0.45\textwidth]{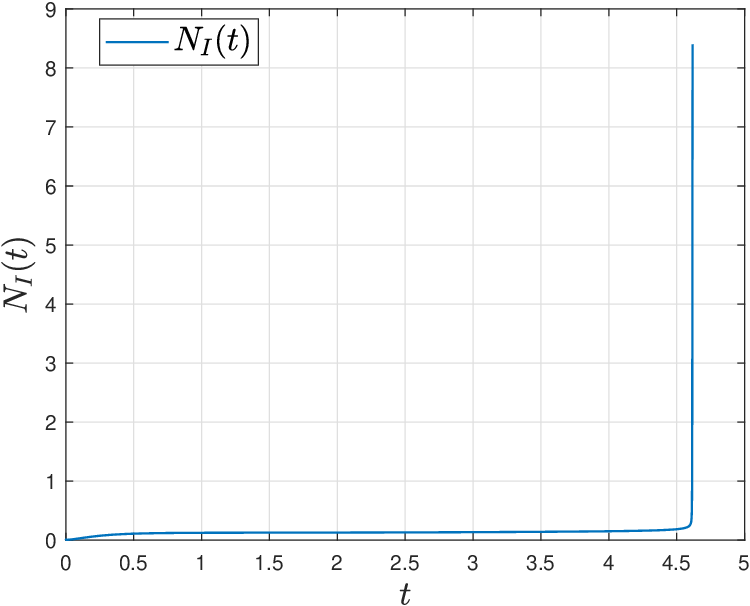}
        \label{fig:blow_up2I_N}
    }\hfill
    \subfigure[$p_I(v, t)$]{         
        \includegraphics[width = 0.47\textwidth]{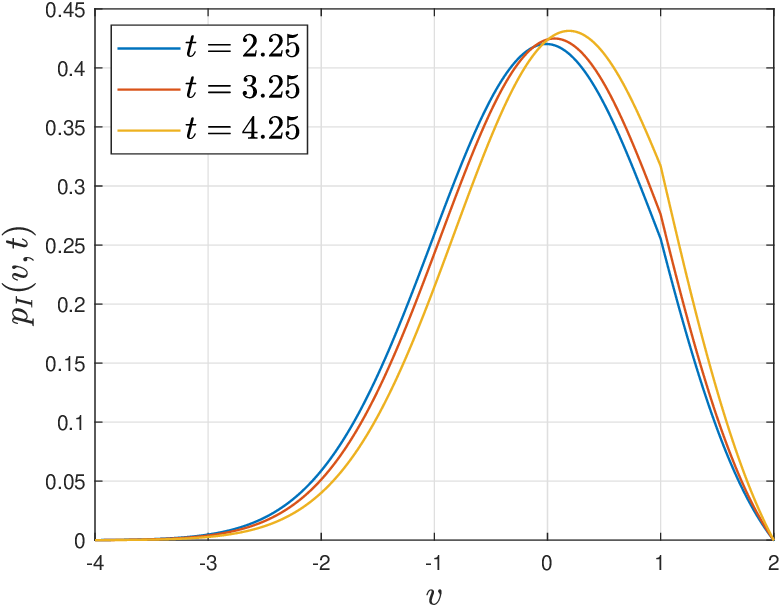}
        \label{fig:blow_up2I_p}
    }  
    \caption{(Blow-up events for the two-population model in Sec. \ref{sec:blow_up_two}) The blow-up phenomenon for the two-population model. The first row is the behavior of the mean firing rate $N_E(t)$, and the density function $p_E(v,t)$ at $t = 3.85, 4.05$, and $4.25$ for the excitatory population, and the second row is the behavior of the mean firing rate $N_I(t)$, and the density function $p_I(v,t)$ at $t = 2.25, 3.25$, and $4.25$ for the inhibitory population. }
    \label{fig:blow_up2}
\end{figure}

\subsection{Two-population model with delays and refractory states}
\label{sec:dely_refactory}

In this section, we perform numerical experiments to investigate the dynamics of the two-population model \eqref{eq:two population}, focusing on the significant roles of refractory states and synaptic delays. These factors are critical in shaping the collective behavior of neuron populations and are known to influence the emergence of periodic solutions, as shown in \cite{Cáceres2018Analysis}. However, the interactions between excitatory and inhibitory populations have been comparatively less explored. 
	\begin{figure}[!hptb]
		\centering
		\subfigure[$N_E(t), b_E^E=3.5$]{
			\includegraphics[width=0.45\textwidth]{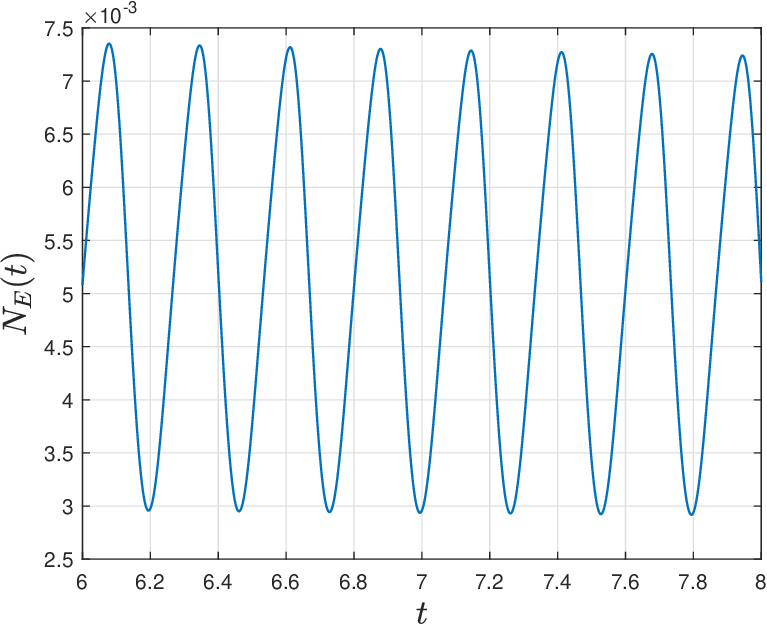}   
			\label{fig_periodic_NE}
		}\hfill
		\subfigure[$N_I(t), b_E^E=3.5$]{
			\includegraphics[width = 0.45\textwidth]{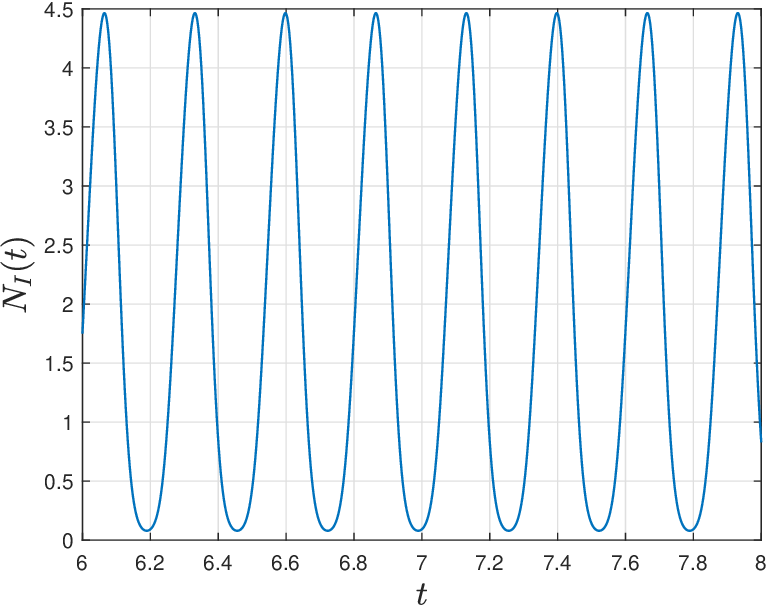}
			\label{fig_periodic_NI}
		}
		\caption{(Two-population model with delays and refractory states in Sec. \ref{sec:dely_refactory}) The evolution of the mean firing rates $N_{\alpha}(t), \alpha = E, I$ for $b_E^E = 3.5$, where the periodic oscillations are observed. }
	\end{figure}
To better understand the interaction between excitatory and inhibitory populations in the neuron networks, specifically, we consider a special scenario where an inhibitory population, exhibiting periodic oscillations under external stimuli due to synaptic delays and refractory states \cite{Cáceres2018Analysis}, is coupled with an excitatory population. By varying the connectivity between the two populations, we aim to observe the dynamic behavior of the system, including transitions between periodic oscillations, steady states, and blow-up events. 
	\begin{figure}[!hptb]
		\centering
		\subfigure[$N_E(t), b_E^E = 3.82$]{
			\includegraphics[width=0.455\textwidth]{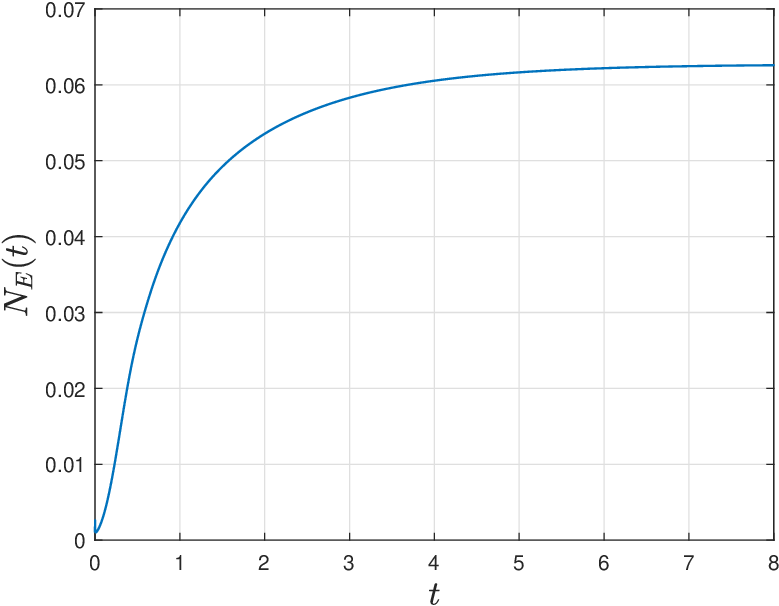}
			\label{fig_steady_NE}
		}\hfill
		\subfigure[$N_I(t), b_E^E = 3.82$]{
			\includegraphics[width=0.45\textwidth]{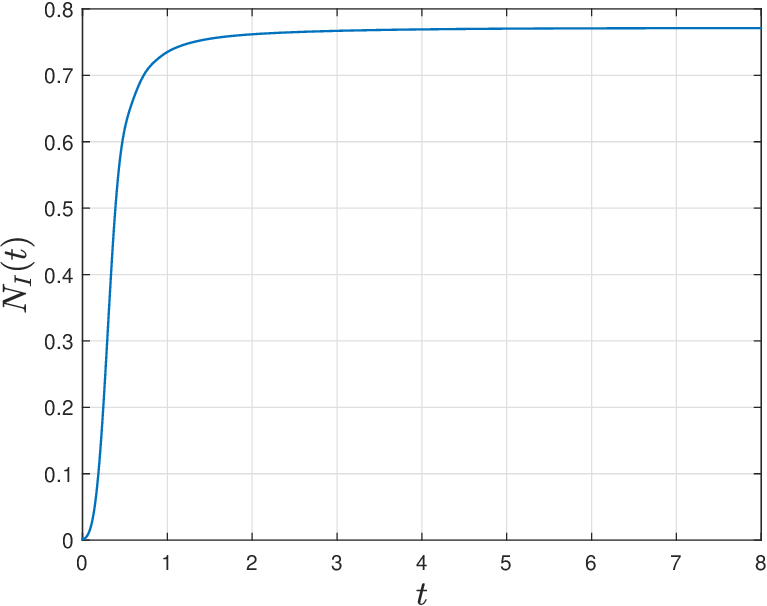}
			\label{fig_steady_NI}
		}
		\caption{(Two-population model with delays and refractory states in Sec. \ref{sec:dely_refactory}) The evolution of the mean firing rates $N_{\alpha}(t), \alpha = E, I$ for $b_E^E = 3.82$, where the phemomenon of steady state are observed.}
	\end{figure}
	
In the simulation, the expansion number is fixed at $M=16$, with a time-step length of $\Delta t=10^{-7}$, and the other parameters are listed in Tab. \ref{tab:ex2_periodic_para}. Since the blow-up events occur for large enough excitatory synaptic strength $b_E^E$ \cite{caceres2016blow}, we systematically vary $b_E^E$ while keeping other parameters fixed. For numerical convenience, the synaptic delays are considered integral multiples of the time-step length. When the excitatory synaptic strength $b_E^E$ is small, the dynamic behavior of the system is expected to resemble that of a single inhibitory population. As $b_E^E$ increases, the system is anticipated to transition to a steady state. For sufficiently large values of $b_E^E$, blow-up events are expected to occur in the system.

Results from the numerical experiments confirm the expected transitions. When the excitatory synaptic strength $b_E^E$ is relatively small, it seems that the inhibitory population dominates the system, and the firing rates $N_{\alpha}(t)$ exhibit periodic oscillations, as shown in Fig. \ref{fig_periodic_NE} and Fig. \ref{fig_periodic_NI} for $b_E^E=3.5$. 

\begin{figure}[!hptb]
		\centering
		\subfigure[$N_E(t), b_E^E = 4$]{
			\includegraphics[width=0.45\textwidth]{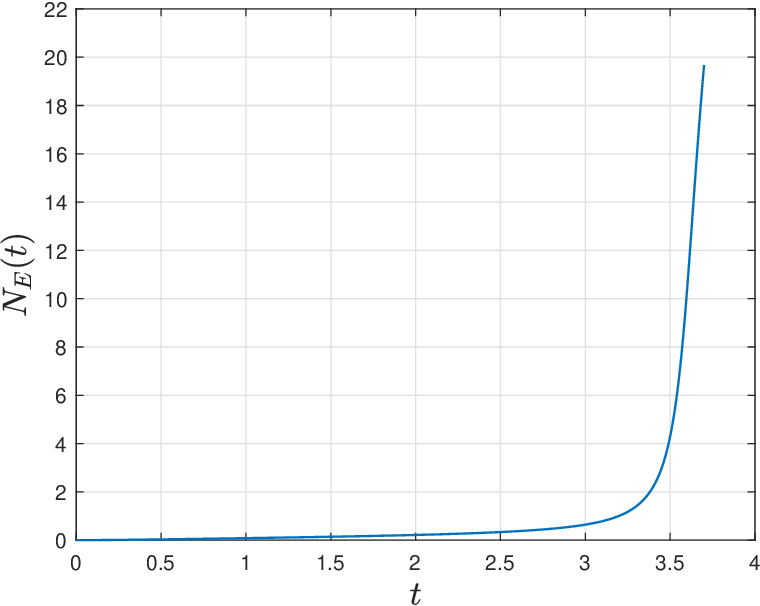}
			\label{fig_blowup_NE}
		}\hfill
		\subfigure[$N_I(t), b_E^E = 4$]{
			\includegraphics[width=0.45\textwidth]{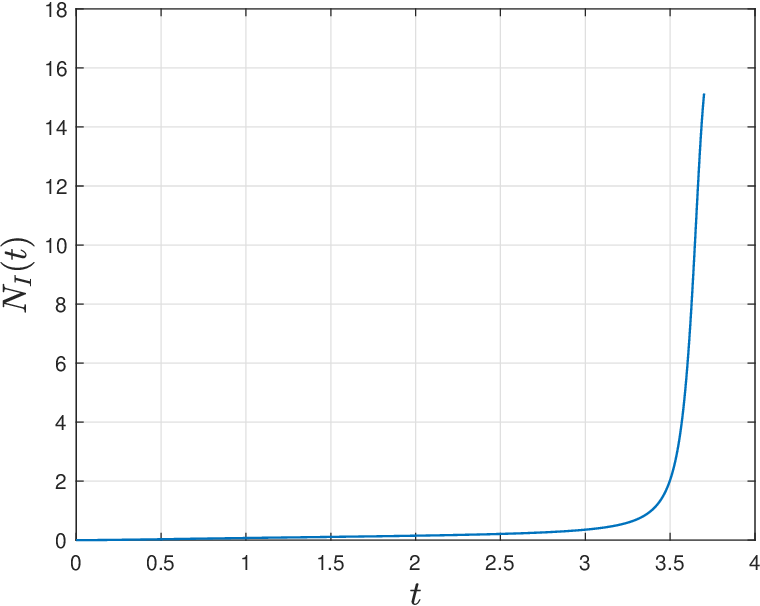}
			\label{fig_blowup_NI}
		}
		\caption{(Two-population model with delays and refractory states in Sec. \ref{sec:dely_refactory}) The evolution of the mean firing rates $N_{\alpha}(t), \alpha = E, I$ for $b_E^E = 4$, where the phenomenon of blow-up are observed.}
	\end{figure}

As $b_E^E$ increases, the influence of the excitatory population seems to intensify, gradually weakening the dominance of the inhibitory population, and shifting the system towards a steady state. In this case, both the excitatory firing rate $N_E(t)$ and inhibitory firing rate $N_I(t)$ increase without explosion and become constant after a certain period, depicted in Fig. \ref{fig_steady_NE} and Fig. \ref{fig_steady_NI} for $b_E^E=3.82$. This transition occurs because the excitatory synaptic strength is enhanced, leading to a more balanced interplay between the excitatory population and the inhibitory population.

Finally, when $b_E^E$ is sufficiently large, the excitatory population dominates the system, causing a blow-up event that leads to the overall blow-up in the system, as observed in Fig. \ref{fig_blowup_NE} and Fig. \ref{fig_blowup_NI} where at $b_E^E=4$. The firing rates $N_{\alpha}(t)$ grow without bound, consistent with the numerical experiments in \cite{Cáceres2018Analysis}.

\section{Conclusion} \label{sec:conclusion}

In this work, we have presented an efficient numerical scheme based on the spectral method to solve the Fokker-Planck equation in the semi-unbounded region associated with the Nonlinear Noisy Leaky Integrate-and-Fire model. The scheme is consistent with the Fokker-Planck equation and exhibits spectral convergence in the spatial direction. A comparison of computation time at the same accuracy level demonstrates the high efficiency of LLSGM. Additionally, we successfully extend the method to an excitatory-inhibitory population model with synaptic delays and refractory periods and explore the transitions between periodic solutions, steady states, and blow-up events. In the future, we may develop a more stable high-order numerical scheme in time and investigate more complex properties of the Fokker-Planck equation.

\section*{Acknowledgements}
We thank Prof. Jie Shen from Eastern Institute of Technology and Prof. Jiwei Zhang from Wuhan University for their valuable suggestions. The work of Zhennan Zhou is partially supported by the National Key R\&D Program of China (Project No. 2021YFA1001200, 2020YFA0712000), and the National Natural Science Foundation of China (Grant No. 12031013, 12171013). This work of Yanli Wang is partially supported by the President Foundation of China Academy of Engineering Physics (YZJJZQ2022017) and the National Natural Science Foundation of China (Grant No. 12171026, U2230402 and 12031013).

\bibliographystyle{plain}

\end{document}